\newtheorem{theorem}{Theorem}[section]
\newtheorem{remark}{Remark}[section]
\newtheorem{proposition}[theorem]{Proposition}
\begin{document}
\title[Ergodicity of 3D MHD System]{Ergodicity of a Galerkin approximation of three-dimensional magnetohydrodynamics system forced by a degenerate noise}
\author{Kazuo Yamazaki}  
\address{University of Rochester, Department of Mathematics, Rochester, NY, 14627, U.S.A., Phone: 585-275-9427 (kyamazak@ur.rochester.edu)}
\date{}
\maketitle

\begin{abstract}
Magnetohydrodynamics system consists of a coupling of the Navier-Stokes and Maxwell's equations and is most useful in studying the motion of electrically conducting fluids. We prove the existence of a unique invariant, and consequently ergodic, measure for the Galerkin approximation system of the three-dimensional magnetohydrodynamics system. The proof is inspired by those of \cite{EM01, R04} on the Navier-Stokes equations; however, computations involve significantly more complications due to the coupling of the velocity field equations with those of magnetic field that consists of four non-linear terms. 
\vspace{5mm}

\textbf{Keywords: ergodicity; Harris' condition; H$\ddot{\mathrm{o}}$rmander's condition; hypoellipticity; invariant measure; magnetohydrodynamics system.}
\end{abstract}
\footnote{2010MSC : 35Q35, 37L55, 60H15}

\section{Introduction}

Magnetohydrodynamics (MHD) system consists of a coupling of the Navier-Stokes and Maxwell's equations and plays a fundamental role in applied sciences such as astrophysics, geophysics and plasma physics. Ever since the pioneering work of \cite{B50, C51}, its study has been the center of attention from many scientists, in particular engineers, mathematicians and physicists. The Navier-Stokes equations (NSE), due to the lack of coupling which consists of four non-linear terms, is significantly easier to study mathematically. For such systems, an existence of a unique invariant measure would describe a statistical equilibrium to which it approaches. 

In comparison to the stochastic MHD system, the study of the stochastic NSE has a long history, starting from \cite{BT73}. The existence of an invariant measure for the two-dimensional (2D) stochastic NSE was obtained in \cite{F94} (see \cite{DDT94, F97a} for the case of the Burgers' equation and the B$\acute{\mathrm{e}}$nard problem respectively). The uniqueness of such an invariant measure was obtained in \cite{FM95, F97b, F99a}. In \cite{EM01}, E and Mattingly considered the Galerkin approximation of the stochastic NSE in 2D case and identified the minimal set of modes needed in order to guarantee the existence of a unique invariant measure. This result was extended to the 3D stochastic NSE in \cite{R04} (see also \cite{LW04} for the case of the Boussinesq system) (see also \cite{M99, BKL01, HM06}). As we will see, e.g. in Proposition 4.1, the computations in the case of the stochastic MHD system is significantly more complicated due to the four non-linear terms, and ergodicity results in the case of 3D is also more difficult than that of 2D.  Besides the NSE, Burgers' equations, B$\acute{\mathrm{e}}$nard problem, other systems of equations have received much attention concerning ergodicity(e.g. Ginzburg-Landau equations in \cite{EH01, O06}, magnetic B$\acute{\mathrm{e}}$nard problem in \cite{Y16c}, micropolar and magneto-micropolar fluid systems in \cite{Y17b}). 

In contrast, the study of the stochastic MHD system remains relatively incomplete. In \cite{SS99}, Sritharan and Sundar studied the well-posedness of the stochastic MHD system with additive noise, followed by an extension to the case with multiplicative noise by Sango \cite{S10a} and Sundar \cite{S10b}. Large deviation principle type result is obtained in \cite{CM10} but only in 2D and not 3D; similarly, the ergodicity of the stochastic MHD system was proven in \cite{BD07} but only in 2D case. The purpose of this manuscript is to follow the work of \cite{EM01, LW04, R04} and prove the existence of a unique invariant, and consequently ergodic, measure for the Galerkin approximation of the three-dimensional (3D) MHD system forced by a degenerate noise. 

\section{Statement of main result}

We consider for $\displaystyle{x \in \mathbb{T}^{3}}$, the 3D torus, the following MHD system: 
\begin{subequations}\label{1}
\begin{align}
& \frac{d u}{dt} + (u\cdot\nabla) u + \nabla \pi - \nu \Delta u = (b\cdot\nabla) b + \frac{d W_{u}}{d t}, \label{1a}\\
& \frac{d b}{d t} + (u\cdot\nabla) b - \eta \Delta b = (b\cdot\nabla) u + \frac{d W_{b}}{d t}, \label{1b}\\
& \nabla\cdot u = 0, \hspace{3mm} \nabla\cdot b = 0, \label{1c}
\end{align}
\end{subequations}
where $u, b: \mathbb{T}^{3} \times \mathbb{R}^{+} \mapsto \mathbb{R}^{3}$ are the velocity and magnetic vector fields respectively, while $\pi: \mathbb{T}^{3} \times \mathbb{R}^{+} \mapsto \mathbb{R}$ is the pressure scalar field. The constants $\nu, \eta > 0$ represent viscosity and diffusivity, and $W_{u}, W_{b}$ are each an additive white noise to be elaborated shortly. Because the solution $(u,b)$ to the MHD system possesses a rescaling property, namely that $\displaystyle{(u_{\lambda}, b_{\lambda})(x,t) \triangleq \lambda (u,b) (\lambda x, \lambda^{2} t), \lambda \in \mathbb{R}}$, solves the system if $(u,b)(x,t)$ does, we may assume that $\nu = \eta = 1$ throughout the rest of the manuscript. Moreover, we write $\partial_{t}$ to denote $\displaystyle{\frac{\partial}{\partial t} = \frac{d}{dt}}$ and $\displaystyle{\int_{\mathbb{T}^{3}} f(x) dx = \int f}$ for brevity. 

We write in Fourier components, 
\begin{equation}\label{2}
u(x,t) =  \sum_{k \in \mathbb{Z}^{3}} u_{k}(t) e^{ik\cdot x}, \hspace{3mm} b(x,t) = \sum_{k \in \mathbb{Z}^{3}} b_{k}(t) e^{ik \cdot x},
\end{equation}  
where
\begin{equation}
u_{k} \triangleq r_{k} + is_{k}, \hspace{3mm} b_{k} \triangleq \tilde{r}_{k} + i\tilde{s}_{k}, \hspace{3mm} k\cdot r_{k} = k \cdot s_{k} = k \cdot \tilde{r}_{k} = k \cdot \tilde{s}_{k} = 0
\end{equation}
and hence $u_{-k} = \overline{u}_{k}, b_{-k} = \overline{b}_{k}$. Furthermore, setting 
\begin{equation*}
r_{k} \triangleq (r_{k}^{1}, r_{k}^{2}, r_{k}^{3}), \hspace{3mm}  s_{k} \triangleq (s_{k}^{1}, s_{k}^{2}, s_{k}^{3}), \hspace{3mm} \tilde{r}_{k} \triangleq (\tilde{r}_{k}^{1}, \tilde{r}_{k}^{2}, \tilde{r}_{k}^{3}), \hspace{3mm}  \tilde{s}_{k} \triangleq (\tilde{s}_{k}^{1}, \tilde{s}_{k}^{2}, \tilde{s}_{k}^{3}),
\end{equation*}
we see that each $r_{k}^{j}, s_{k}^{j}, \tilde{r}_{k}^{j}, \tilde{s}_{k}^{j}, j = 1, 2, 3, k \in \mathbb{Z}^{3}$, become $\mathbb{R}$-valued. We set 
\begin{subequations}\label{4}
\begin{align}
&dW_{u} \triangleq \sum_{k \in \mathcal{N}} q_{uk}d\beta_{ut}^{k} e^{ik\cdot x}, \hspace{3mm} q_{uk} \triangleq q_{uk}^{r} + i q_{uk}^{s},\\ 
&dW_{b} \triangleq \sum_{k \in \mathcal{N}} q_{bk} d\beta_{bt}^{k} e^{ik\cdot x}, \hspace{4mm} q_{bk} \triangleq q_{bk}^{r} + i q_{bk}^{s}, 
\end{align}
\end{subequations}
where $\{\beta_{ut}^{k} \}_{k}, \{\beta_{bt}^{k}\}_{k}$ are independent 3D Wiener processes, for which we make the following assumptions: the covariance of the noise is diagonal in the Fourier basis and the noise is divergence-free vector valued so that $q_{uk} \cdot k = q_{bk} \cdot k = 0$ for all $k$. Moreover, $\mathcal{N}$ is the set of modes forced, to be elaborated shortly. We recall the projection onto the space of divergence-free vector fields defined by 
\begin{equation}\label{5}
\mathcal{P} (\theta e^{ik\cdot x}) = \left(\theta - \frac{k \otimes k}{\lvert k \rvert^{2}}\cdot \theta \right) e^{ik\cdot x} = \left(\theta - \frac{\theta \cdot k}{\lvert k \rvert^{2}} k \right) e^{ik\cdot x}
\end{equation}
so that the projection gives 
\begin{equation}
\begin{split} 
\mathcal{P} ((u\cdot\nabla) u) 
=& \mathcal{P} \left( \sum_{k \in \mathbb{Z}^{3}} \sum_{h, l: h + l = k} i(u_{h} \cdot l) u_{l} e^{ik\cdot x} \right) \\
=& i \sum_{k \in \mathbb{Z}^{3}} \sum_{h,l: h + l = k} (l\cdot u_{h}) \left( u_{l} - \frac{k\cdot u_{l}}{ \lvert k \rvert^{2}} k \right) e^{ik\cdot x} \\
=& i \sum_{k \in \mathbb{Z}^{3}} \sum_{h, l: h + l = k} (k\cdot u_{h}) \left( u_{l} - \frac{k\cdot u_{l}}{\lvert k \rvert^{2}} k \right) e^{ik\cdot x}, 
\end{split}
\end{equation}
by (\ref{2}), (\ref{5}), and that $h \cdot u_{h} = 0$. Similarly, we may write 
\begin{equation}
\mathcal{P} ((b\cdot\nabla) b) = i \sum_{k \in \mathbb{Z}^{3}} \sum_{h, l: h + l = k} (k\cdot b_{h}) \left( b_{l} - \frac{k\cdot b_{l}}{\lvert k \rvert^{2}} k \right) e^{ik\cdot x},
\end{equation}
\begin{equation}
(u\cdot\nabla) b = i \sum_{k \in \mathbb{Z}^{3}} \sum_{h,l: h + l = k} (k \cdot u_{h}) b_{l} e^{ik\cdot x},  
\end{equation}
\begin{equation}
(b\cdot\nabla) u = i \sum_{k \in \mathbb{Z}^{3}} \sum_{h,l: h + l = k} (k\cdot b_{h}) u_{l}  e^{ik\cdot x}, 
\end{equation}
by that $h\cdot u_{h} = h\cdot b_{h} = 0$. Therefore, writing (\ref{1}) in a differential form after having applied $\mathcal{P}$ gives 
\begin{subequations}
\begin{align}
& du = \left(\Delta u - \mathcal{P} ((u\cdot\nabla) u) + \mathcal{P} ((b\cdot\nabla) b) \right) dt + dW_{u},\\
& db = \left( \Delta b - (u\cdot\nabla) b + (b\cdot\nabla) u \right) dt + dW_{b},
\end{align}
\end{subequations}
where we used the assumption that the noise is divergence-free; hence, for each $k\in \mathbb{Z}^{3}$, 
\begin{equation}
\begin{split}
& du_{k} = ( - \lvert k\rvert^{2} u_{k} - i \sum_{h,l: h + l = k} (k\cdot u_{h}) \left( u_{l} - \frac{k \cdot u_{l}}{\lvert k \rvert^{2}} k \right) \\
& \hspace{10mm} + i\sum_{h,l: h+ l=k} (k\cdot b_{h}) \left( b_{l} - \frac{k\cdot b_{l}}{\lvert k \rvert^{2}} k \right) ) dt + q_{uk} d\beta_{ut}^{k}, 
\end{split}
\end{equation} 
\begin{equation}
 db_{k} = \left( - \lvert k \rvert^{2} b_{k} - i \sum_{h,l: h + l=k} (k\cdot u_{h}) b_{l} + i \sum_{h,l: h + l=k} (k\cdot b_{h}) u_{l}\right) dt + q_{bk} d\beta_{bt}^{k}.
\end{equation} 
We fix $N \in \mathbb{N}$ and set 
\begin{equation}
\mathcal{K}_{N} \triangleq \{k \in \mathbb{Z}^{3}: k \neq (0, 0, 0), \lvert k \rvert_{\infty}  \leq N \},
\end{equation}
where $\lvert k \rvert_{\infty} = \max\{\lvert k_{1} \rvert, \lvert k_{2} \rvert, \lvert k_{3} \rvert \}$, the sup-norm in $\mathbb{R}^{3}$; the exclusion of $(0,0,0)$ is to ensure the mean-zero property. Projecting our system onto the space spanned by $\displaystyle{\{e^{ik\cdot x}\}_{k \in \mathcal{K}_{N}}}$ so that relabeling 
\begin{equation}
u(x,t) = \sum_{k \in \mathcal{K}_{N}} u_{k}(t) e^{ik\cdot x}, \hspace{3mm} b(x,t) = \sum_{k \in \mathcal{K}_{N}}b_{k}(t) e^{ik\cdot x},
\end{equation}
we obtain the following finite-dimensional system: for each $k \in \mathcal{K}_{N}$, 
\begin{equation}\label{15}
\begin{split} 
&du_{k} = ( - \lvert k \rvert^{2} u_{k} - i \sum_{h,l \in \mathcal{K}_{N}: h + l = k} (k\cdot u_{h}) \left( u_{l} - \frac{k \cdot u_{l}}{\lvert k \rvert^{2}} k \right) \\
& \hspace{10mm} + i\sum_{h,l \in \mathcal{K}_{N}: h + l = k} (k\cdot b_{h}) \left( b_{l} - \frac{k \cdot b_{l}}{\lvert k \rvert^{2}} k \right) ) dt + q_{uk} d\beta_{ut}^{k}, 
\end{split} 
\end{equation}
\begin{equation}\label{16}
\begin{split} 
&db_{k} = ( - \lvert k \rvert^{2} b_{k} - i \sum_{h,l \in \mathcal{K}_{N}: h + l = k} (k\cdot u_{h}) b_{l}\\
& \hspace{15mm} + i \sum_{h,l \in \mathcal{K}_{N}: h + l = k} (k\cdot b_{h}) u_{l} ) dt + q_{bk} d\beta_{bt}^{k}.
\end{split} 
\end{equation}
We denote by $\mathcal{N}$ the set of modes forced, i.e. the set of $k \in \mathcal{K}_{N}$ for which $q_{uk} = 0$ and $q_{bk} = 0$ is impossible. Let us now state our main result.  
\begin{theorem}
Suppose $\mathcal{N}$ contains $(1,0,0), (0,1,1), (0,0,1)$. Then there exists a unique invariant measure for the system (\ref{15}), (\ref{16}). 
\end{theorem}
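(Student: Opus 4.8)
The plan is to establish ergodicity via the standard two-ingredient recipe for finite-dimensional degenerately-forced SDE: first show that the Markov semigroup generated by \eqref{15}--\eqref{16} satisfies a uniform-in-time energy estimate that yields both existence of an invariant measure and a Lyapunov/minorization structure, and second show the transition probabilities are absolutely continuous with a strictly positive smooth density by verifying the weak H\"ormander bracket condition for the associated vector fields. Uniqueness then follows from the standard Doob--Khas'minskii argument (equivalently, verifying Harris' condition): a strong Feller property from H\"ormander plus topological irreducibility forces at most one invariant measure.

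For the first ingredient, I would test \eqref{15}--\eqref{16} against $(u,b)$ and use the basic MHD cancellations: the trilinear terms $\langle \mathcal{P}((u\cdot\nabla)u),u\rangle$, $\langle (u\cdot\nabla)b,b\rangle$ vanish by divergence-freeness, and the cross terms $\langle \mathcal{P}((b\cdot\nabla)b),u\rangle$ and $\langle (b\cdot\nabla)u,b\rangle$ cancel against each other. This gives $d(\lvert u\rvert^2+\lvert b\rvert^2) \le (-2\lambda_1(\lvert u\rvert^2+\lvert b\rvert^2) + C)\,dt + d(\text{martingale})$, hence a uniform bound on $\mathbb{E}(\lvert u\rvert^2+\lvert b\rvert^2)$ and tightness, so Krylov--Bogoliubov produces an invariant measure; the quadratic Lyapunov function also furnishes the ``return to a compact set'' half of Harris.

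The heart of the argument, and the main obstacle, is the H\"ormander bracket computation. One forms the control vector fields: the drift $X_0$ given by the right-hand side of \eqref{15}--\eqref{16}, and the constant forcing directions $X_k$ indexed by the forced modes. One must then show that iterated Lie brackets $[X_0,X_k]$, $[X_j,[X_0,X_k]]$, $[[X_0,X_k],[X_0,X_l]]$, etc.\ span the whole phase space $\mathbb{R}^{\dim}$ at every point. Starting from the three seed modes $(1,0,0)$, $(0,1,1)$, $(0,0,1)$ (and their conjugates), the bilinear nonlinearities in the drift act as a ``multiplication'' operation on Fourier modes: bracketing a forcing direction at mode $h$ against the drift and reading off the piece involving another already-reachable mode $l$ generates motion in the mode $h+l$, with coefficient controlled by geometric factors like $k\cdot u_h$ and the Leray-projection corrections. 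The key number-theoretic fact to verify is that repeated sums $h+l$ (and differences, via conjugate modes) of the three seeds generate all of $\mathcal{K}_N$, \emph{and}, more delicately, that at each stage the vectorial coefficients do not accidentally vanish or remain confined to a proper subspace of the two-dimensional divergence-free plane $\{v:v\cdot k=0\}$ attached to each mode $k$ — here the coupling between the $u$-equation and the $b$-equation (four nonlinear terms rather than one) both helps, by giving extra bracket directions, and complicates the bookkeeping, since one must track which of $r_k,s_k,\tilde r_k,\tilde s_k$ components get populated. This is precisely the place where the computation is ``significantly more complicated'' than the Navier--Stokes case of \cite{EM01, R04}, and I expect it to occupy the bulk of the proof (cf.\ Proposition 4.1).

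Once H\"ormander's condition holds, hypoellipticity of the generator gives that each transition kernel $P_t(x,\cdot)$ has a $C^\infty$ density, hence the semigroup is strong Feller; combined with the support theorem (the reachable set from any point under the controlled ODE is all of $\mathbb{R}^{\dim}$, which follows from the same bracket-spanning plus the energy bound keeping trajectories from escaping), this gives irreducibility. Strong Feller plus irreducibility plus the existence of \emph{some} invariant measure yields uniqueness by Doob's theorem, and uniqueness of the invariant measure is equivalent to ergodicity. I would present the energy estimate as a short lemma, quote the Krylov--Bogoliubov and Doob--Khas'minskii theorems, state the H\"ormander/hypoellipticity input as a theorem, and devote a dedicated section (with the delicate bracket bookkeeping in a proposition) to verifying the rank condition from the three prescribed seed modes.
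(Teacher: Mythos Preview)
Your proposal is correct and follows essentially the same architecture as the paper: energy estimate plus Krylov--Bogoliubov for existence, H\"ormander bracket-spanning (the paper's Proposition~4.1--4.3) for smooth transition densities, and then a Harris-type argument for uniqueness. The one substantive difference is in how irreducibility/recurrence is obtained: you invoke the Stroock--Varadhan support theorem, whereas the paper instead proves directly (Propositions~4.4--4.6) that the dynamics enters any ball around the origin infinitely often, by a pathwise energy-decay estimate on the noise-shifted system $(v,B)=(u-\tilde f_u,\,b-\tilde f_b)$ conditioned on the event that the noise stays small. Both routes are valid; the paper's hands-on recurrence avoids having to argue global controllability from the bracket condition, while your support-theorem route is more black-box but would require you to check that the controlled ODE actually reaches every open set (local spanning from H\"ormander is not quite the same thing).
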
 
\begin{remark}
Theorem 1.1 may be extended to the magnetic B$\acute{\mathrm{e}}$nard problem studied in \cite{Y16c} and micropolar and magneto-micropolar fluid systems in \cite{Y17b}; however, it is not clear to the author if it may be extended to the nonhomogeneous MHD system studied in \cite{Y16a}, B$\acute{\mathrm{e}}$nard problem with zero dissipation studied in \cite{Y16b}, or the Hall-MHD system studied in \cite{Y17c}. 

An interesting further improvement of Theorem 2.1 would be to extend the work of \cite{HM06} to the MHD system (\ref{1}). However, the work of \cite{HM06} relied on that of \cite{MP06} and there is a certain symmetry of the velocity equation (\ref{1a}) that is used therein \cite[(3.3), (3.4)]{MP06} that does not seem available for the equation of the magnetic field due to $(u\cdot\nabla) b$ and $(b\cdot\nabla) u$ in (\ref{1b}). In short, Mattingly and Pardoux in \cite{MP06} was able to write the $l$-th mode of the non-linear term $-(u\cdot\nabla) u$ of (\ref{1a}) after applying a curl operator as 
\begin{equation*}
\begin{split}
(-(\mathcal{K} v \cdot \nabla) v)_{l} =&  \sum_{j,k: j + k = l}  ( \frac{ j^{\bot}}{\lvert j \rvert^{2}} v_{j} \cdot k ) v_{k} \\
=& \frac{1}{2} \sum_{j,k: j + k = l} \left( \frac{j^{\bot} \cdot k}{\lvert j \rvert^{2}} v_{j} v_{k} + \frac{k^{\bot} \cdot j}{\lvert k \rvert^{2}} v_{k} v_{j} \right) = \sum_{j,k: j + k =l} c(j,k) v_{j}v_{k}, 
\end{split} 
\end{equation*}
if $v = \nabla \times u$, $\mathcal{K}$ represents the Biot-Savart law operator so that $\mathcal{K} v = u$,  and $c(j,k) = \frac{1}{2} (j^{\bot}\cdot k) (\frac{1}{\lvert j \rvert^{2}} - \frac{1}{\lvert k \rvert^{2}})$. It is not clear how to obtain such a symmetric form for $(u\cdot\nabla) b$ and $(b\cdot\nabla) u$ in (\ref{1b}).  
\end{remark}

\section{Preliminaries}
We denote the standard probability space $(\Omega, \mathcal{F}, \mathcal{F}_{t}, \mathbb{P})$ with $\mathbb{E}$ the expectation with respect to $\mathbb{P}$. For simplicity we write $A \lesssim_{a,b} B, A \approx_{a,b} B$ if there exists a constant $C = C(a,b) \geq 0$ such that $A \leq C B, A = CB$ respectively. As $u_{-k} = \overline{u}_{k}, b_{-k} = \overline{b}_{k}$ due to (3), we consider a smaller set of indices $\tilde{\mathcal{K}}$ for which we denote 
\begin{equation}
\begin{split}
\mathcal{K}_{N}^{1} \triangleq \{k \in \mathbb{Z}^{3}: \lvert k \rvert_{\infty} \leq N, k_{3} > 0 \},
\end{split}
\end{equation}
\begin{equation}
\begin{split}
\mathcal{K}_{N}^{2} \triangleq \{k \in \mathbb{Z}^{3}: \lvert k \rvert_{\infty} \leq N, k_{3} = 0, k_{2} > 0 \}, 
\end{split}
\end{equation}
\begin{equation}
\begin{split}
 \mathcal{K}_{N}^{3} \triangleq \{k \in \mathbb{Z}^{3}: \lvert k \rvert_{\infty} \leq N, k_{3} = k_{2} = 0, k_{1} > 0\},
\end{split}
\end{equation}
and define $\tilde{\mathcal{K}} \triangleq  \mathcal{K}_{N}^{1} \cup \mathcal{K}_{N}^{2} \cup \mathcal{K}_{N}^{3}$. As $\mathcal{K}_{N}$ excludes $(0,0,0)$ due to (13), we see that 
\begin{equation}
\mathcal{K}_{N} = \tilde{\mathcal{K}} \cup (- \tilde{\mathcal{K}}), \hspace{3mm} \tilde{\mathcal{K}} \cap (-\tilde{\mathcal{K}}) = \emptyset. 
\end{equation}
It can be checked that the cardinality of $\displaystyle{\tilde{\mathcal{K}}}$ is $\displaystyle{\#\tilde{\mathcal{K}} = \frac{1}{2} [(2N+1)^{3} - 1]}$ due to $(2N+1)^{2}N$ from $\mathcal{K}_{N}^{1}$, $(2N+1)N$ from $\mathcal{K}_{N}^{2}$, and $N$ from $\mathcal{K}_{N}^{3}$. Hereafter we refer to this number as $D \triangleq$ $\# \tilde{\mathcal{K}} = \frac{1}{2} [(2N+1)^{3} -1]$. Now we denote by 
$\displaystyle{\lVert u \rVert \triangleq  \sum_{k \in \tilde{\mathcal{K}}} \lvert u_{k} \rvert^{2}}$, and see that by Ito's formula using $F(x,t) = x^{2}$ on (\ref{15}), (\ref{16}), and summing over $k \in \tilde{\mathcal{K}}$, we may deduce 
\begin{equation*}
\begin{split} 
&d(\lVert u \rVert^{2} + \lVert b \rVert^{2}) + \sum_{k \in \tilde{\mathcal{K}}}2 \lvert k \rvert^{2} [\lvert u_{k} \rvert^{2} + \lvert b_{k} \rvert^{2}] dt\\
=& \sum_{k \in \tilde{\mathcal{K}}} [2u_{k}\cdot  q_{uk} d\beta_{ut}^{k} + 2b_{k}\cdot  q_{bk} d\beta_{bt}^{k} + \left(Tr (q_{uk}^{T} \overline{q}_{uk}) + Tr (q_{bk}^{T} \overline{q}_{bk}) \right) dt ].
\end{split}
\end{equation*}
We also denote by 
\begin{equation*}
\sigma_{u}^{2} \triangleq \sum_{k \in \tilde{\mathcal{K}}} Tr (q_{uk}^{T} \overline{q}_{uk}), \hspace{3mm} \sigma_{b}^{2} \triangleq \sum_{k \in \tilde{\mathcal{K}}} Tr (q_{bk}^{T} \overline{q}_{bk}),
\end{equation*}
take expected value $\mathbb{E}$ and integrate over $[0,t]$ to obtain 
\begin{equation}
\begin{split} 
& \mathbb{E}[( \lVert u \rVert^{2} + \lVert b \rVert^{2})(t) +  2 \sum_{k \in \tilde{\mathcal{K}}} \lvert k \rvert^{2} \int_{0}^{t} \lvert u_{k} \rvert^{2} + \lvert b_{k} \rvert^{2} ds]\\
=& \mathbb{E} [(\lVert u \rVert^{2} + \lVert b\rVert^{2})(0)] + (\sigma_{u}^{2} + \sigma_{b}^{2})t.
\end{split}
\end{equation} 
As $k \neq (0,0,0)$ in $\tilde{\mathcal{K}}$, we may apply Poincar$\acute{\mathrm{e}}$' inequality to deduce 
\begin{equation*}
\mathbb{E}[( \lVert u \rVert^{2} + \lVert b \rVert^{2})(t) +  2 \int_{0}^{t} \lVert u \rVert^{2} + \lVert b \rVert^{2} ds]
\leq \mathbb{E} [(\lVert u \rVert^{2} + \lVert b\rVert^{2})(0)] + (\sigma_{u}^{2} + \sigma_{b}^{2})t,
\end{equation*}
while Gronwall's inequality type argument also deduces
\begin{equation*}
\mathbb{E}[(\lVert u\rVert^{2} + \lVert b \rVert^{2})(t)]
\leq \mathbb{E} [(\lVert u\rVert^{2} + \lVert b\rVert^{2})(0)] + \frac{(\sigma_{u}^{2} + \sigma_{b}^{2})}{2}. 
\end{equation*}
Now using classical method of applying Krylov-Bogoliubov theorem from \cite{KB37}, specifically \cite[Corollary 11.8]{DZ14}, the existence of an invariant measure follows (see \cite{F94, DDT94, F97a, Y16c} for the cases of the NSE, Burgers equations, B$\acute{\mathrm{e}}$nard and magnetic B$\acute{\mathrm{e}}$nard problems respectively). Because a unique invariant measure in a Polish space is ergodic (see e.g. \cite[Theorem 3.2.6]{DZ96}), it suffices to show the uniqueness to complete the proof of Theorem 2.1. In this regard, we follow the approach of \cite{EM01} and turn to the result of Harris in \cite{H56}: given $\displaystyle{\{x_{n}\}_{n\in \mathbb{Z}^{+} \cup \{0\}}}$, a Markov process on a topological space $\mathbb{X}$ with its Borel $\sigma$-algebra denoted by $\mathcal{B}(\mathbb{X})$, $\{x_{n}\}_{n \in \mathbb{Z}^{+} \cup \{0\}}$ is said to satisfy Harris' condition if there exists a $\sigma$-finite measure $m$ on $\mathbb{X}$ such that if $m(E) > 0, E \in \mathcal{B}(\mathbb{X})$, then 
\begin{equation*}
\mathbb{P}_{x_{0}} \{x_{n} \in E \text{ infinitely often} \} = 1 \hspace{3mm} \forall \hspace{1mm} \text{starting points } x_{0} \in \mathbb{\mathbb{X}}.
\end{equation*} 
Under such a condition, due to Theorem 1 of \cite{H56} there exists a measure $Q$, unique up to a constant multiplier, that solves the equation 
\begin{equation*}
Q(E) = \int_{\mathbb{X}} P(x,E) Q(dx) \hspace{3mm} \forall \hspace{1mm} E \in \mathcal{B}(\mathbb{X})
\end{equation*}
where $P(x, \cdot)$ is the transition probability distribution of the Markov process. Therefore, in the remainder of the manuscript, we devote our effort to show that the transition probability densities are regular by invoking the H$\ddot{\mathrm{o}}$rmander's hypoellipticity condition (\cite{H67}), and that the dynamics enters any neighborhood of the origin infinitely often. 

Our proof is inspired by those of \cite{EM01, R04, LW04, W06} but the computations in the case of the MHD system are significantly more involved due to the coupling of the velocity and magnetic vector fields and four non-linear terms, in particular the proof of Proposition 4.1. For readers' convenience, we give details which were explained only briefly in previous works, while refer to them when proofs are very similar. 

\section{Proof of main result}

\subsection{Smoothness of the transition density}
In this section, we follow the work of \cite{R04} closely (see also \cite{LW04}). By (20), we may write for $k \in \tilde{\mathcal{K}}$, 
\small
\begin{equation}
\sum_{h, l \in \mathcal{K}_{N}: h + l = k } = \sum_{h, l \in \tilde{\mathcal{K}}: h + l = k} + \sum_{h \in \tilde{\mathcal{K}}, l \in - \tilde{\mathcal{K}}: h + l = k} + \sum_{h\in - \tilde{\mathcal{K}}, l \in \tilde{\mathcal{K}}: h + l = k} + \sum_{h,l\in -\tilde{\mathcal{K}}: h + l = k}
\end{equation}
\normalsize 
where using the definitions of $\mathcal{K}_{N}^{1}, \mathcal{K}_{N}^{2}, \mathcal{K}_{N}^{3}$ in (17), (18), (19) respectively, it can be proven that $\{h, l \in -\tilde{\mathcal{K}}: h + l = k \in \tilde{\mathcal{K}} \} = \emptyset$. 

Indeed, as $\displaystyle{\tilde{\mathcal{K}} = \mathcal{K}_{N}^{1} \cup \mathcal{K}_{N}^{2} \cup \mathcal{K}_{N}^{3}}$ due to (17), (18), (19), it suffices to show that when $\displaystyle{h, l \in - \tilde{\mathcal{K}}, h + l = k \notin \mathcal{K}_{N}^{1} \cup\mathcal{K}_{N}^{2} \cup \mathcal{K}_{N}^{3}}$. If 
\begin{equation*}
h, l \in - \tilde{\mathcal{K}} = \mathcal{K}_{N} \setminus \tilde{\mathcal{K}} = \mathcal{K}_{N} \setminus (\mathcal{K}_{N}^{1} \cup \mathcal{K}_{N}^{2} \cup \mathcal{K}_{N}^{3}), 
\end{equation*}
then firstly, $\displaystyle{k_{3} = h_{3} + l_{3} \leq 0}$ as $\displaystyle{h_{3} \leq 0, l_{3} \leq 0}$ and hence $\displaystyle{k \notin \mathcal{K}_{N}^{1}}$. Secondly, if $k_{3} = 0$, then we have $h_{3} = l_{3} = 0$ because $h_{3} \leq 0, l_{3} \leq 0$ which implies $h_{2} \leq 0, l_{2} \leq 0$ as $h, l \in \mathcal{K}_{N} \setminus \mathcal{K}_{N}^{2}$. Therefore, $k_{2} = h_{2} + l_{2} \leq 0$ which implies $k \notin \mathcal{K}_{N}^{2}$. Thirdly, if $k_{3} = k_{2} = 0$, then $h_{3} = l_{3} = 0$ because $h_{3}, l_{3} \leq 0$ as $h, l \in \mathcal{K}_{N} \setminus \mathcal{K}_{N}^{1}$. The facts that $h_{3} = l_{3} = 0$ and $h, l \in \mathcal{K}_{N} \setminus \mathcal{K}_{N}^{2}$ imply $h_{2} \leq 0, l_{2} \leq 0$. The facts that $h_{2} \leq 0, l_{2} \leq 0$ and $k_{2} = 0, k_{2} = h_{2} + l_{2}$ together imply $h_{2} = l_{2} = 0$. That $h_{2} = l_{2} =0, h_{3} = l_{3} = 0$ and $l, h \in \mathcal{K}_{N}\setminus \mathcal{K}_{N}^{3}$ imply $h_{1} \leq 0, l_{1} \leq 0$. Therefore, $k_{1} = h_{1} + l_{1} \leq 0$ which implies $k \notin \mathcal{K}_{N}^{3}$. Hence, we have shown that $\displaystyle{\{h, l \in - \tilde{\mathcal{K}}: h + l = k \in \tilde{\mathcal{K}} \} = \emptyset}$. 

Thus, continuing from (22) we may write for $k \in \tilde{\mathcal{K}}$, denoting by $\sum^{\ast}$ the sum over $h, l \in \tilde{\mathcal{K}}$, 
\begin{equation}
\sum_{h, l \in \mathcal{K}_{N}: h + l = k} 
=\sum_{h+l=k}^{\ast} + \sum_{h-l=k}^{\ast} + \sum_{l-h = k}^{\ast}. 
\end{equation}
Recalling that $u_{-k} = \overline{u}_{k}, b_{-k} = \overline{b}_{k}$ due to (3), we now rewrite (\ref{15}), (\ref{16}) as 
\begin{equation*}
\begin{split} 
du_{k} =& [-\lvert k \rvert^{2} u_{k} - i \sum_{h+l=k}^{\ast} (k\cdot u_{h}) \left( u_{l} - \frac{k\cdot u_{l}}{\lvert k \rvert^{2}} k \right)  -i \sum_{h-l=k}^{\ast} (k\cdot u_{h}) \left( \overline{u}_{l} - \frac{k \cdot \overline{u}_{l}}{\lvert k \rvert^{2}} k \right)\\
 -& i \sum_{l-h=k}^{\ast} (k\cdot \overline{u}_{h}) \left( u_{l} - \frac{k\cdot u_{l}}{ \lvert k \rvert^{2}} k \right) + i \sum_{h+l=k}^{\ast} (k\cdot b_{h}) \left( b_{l} - \frac{k\cdot b_{l}}{\lvert k \rvert^{2}} k \right)\\
 +& i \sum_{h-l=k}^{\ast} (k\cdot b_{h}) \left( \overline{b}_{l} - \frac{k\cdot \overline{b}_{l}}{\lvert k \rvert^{2}} k \right) + i \sum_{l-h=k}^{\ast} (k\cdot \overline{b}_{h}) \left( b_{l} - \frac{k\cdot b_{l}}{\lvert k \rvert^{2}} k \right) ] dt + q_{uk} d\beta_{ut}^{k} 
\end{split} 
\end{equation*}
and 
\begin{equation*}
\begin{split}
db_{k} =& [-\lvert k \rvert^{2} b_{k} - i \sum_{h+l=k}^{\ast} (k\cdot u_{h}) b_{l} - i \sum_{h-l=k}^{\ast} (k\cdot u_{h}) \overline{b}_{l} - i \sum_{l-h=k} (k\cdot \overline{u}_{h}) b_{l} \\
& \hspace{7mm} + i \sum_{h+l=k}^{\ast} (k\cdot b_{h}) u_{l} + i \sum_{h-l=k}^{\ast} (k\cdot b_{h}) \overline{u}_{l} + i \sum_{l-h=k}^{\ast} (k\cdot \overline{b}_{h}) u_{l} ] dt + q_{bk} d\beta_{bt}^{k}.
\end{split}
\end{equation*}
Now by (3), (4a), (4b), 
\begin{equation*}
\begin{split} 
dr_{k} =& [- \lvert k \rvert^{2} r_{k} + \sum_{h+l=k}^{\ast} (k\cdot r_{h}) \left( s_{l} - \frac{k\cdot s_{l}}{\lvert k \rvert^{2}} k\right) + (k\cdot s_{h}) \left( r_{l} - \frac{k\cdot r_{l}}{\lvert k \rvert^{2}} k \right) \\
&- \sum_{h-l=k}^{\ast} (k\cdot r_{h}) \left( s_{l} - \frac{k\cdot s_{l}}{\lvert k \rvert^{2}} k\right) - (k\cdot s_{h}) \left( r_{l} - \frac{k\cdot r_{l}}{\lvert k \rvert^{2}} k \right) \\
&+ \sum_{l-h=k}^{\ast} (k\cdot r_{h}) \left( s_{l} - \frac{k\cdot s_{l}}{\lvert k \rvert^{2}} k\right) - (k\cdot s_{h}) \left( r_{l} - \frac{k\cdot r_{l}}{\lvert k \rvert^{2}} k \right) \\
& - \sum_{h+l=k}^{\ast} (k\cdot \tilde{r}_{h}) \left( \tilde{s}_{l} - \frac{k\cdot \tilde{s}_{l}}{\lvert k \rvert^{2}} k\right) + (k\cdot \tilde{s}_{h}) \left( \tilde{r}_{l} - \frac{k\cdot \tilde{r}_{l}}{\lvert k \rvert^{2}} k \right) \\
&+ \sum_{h-l=k}^{\ast} (k\cdot \tilde{r}_{h}) \left( \tilde{s}_{l} - \frac{k\cdot \tilde{s}_{l}}{\lvert k \rvert^{2}} k\right) - (k\cdot \tilde{s}_{h}) \left( \tilde{r}_{l} - \frac{k\cdot \tilde{r}_{l}}{\lvert k \rvert^{2}} k \right) \\
&- \sum_{l-h=k}^{\ast} (k\cdot \tilde{r}_{h}) \left( \tilde{s}_{l} - \frac{k\cdot \tilde{s}_{l}}{\lvert k \rvert^{2}} k\right) - (k\cdot \tilde{s}_{h}) \left( \tilde{r}_{l} - \frac{k\cdot \tilde{r}_{l}}{\lvert k \rvert^{2}} k \right) ] dt + q_{uk}^{r}d\beta_{ut}^{k},
\end{split} 
\end{equation*}
\begin{equation*}
\begin{split} 
ds_{k} =& [- \lvert k \rvert^{2} s_{k} - \sum_{h+l=k}^{\ast} (k\cdot r_{h}) \left( r_{l} - \frac{k\cdot r_{l}}{\lvert k \rvert^{2}} k\right) - (k\cdot s_{h}) \left( s_{l} - \frac{k\cdot s_{l}}{\lvert k \rvert^{2}} k \right) \\
&- \sum_{h-l=k}^{\ast} (k\cdot r_{h}) \left( r_{l} - \frac{k\cdot r_{l}}{\lvert k \rvert^{2}} k\right) + (k\cdot s_{h}) \left( s_{l} - \frac{k\cdot s_{l}}{\lvert k \rvert^{2}} k \right) \\
&- \sum_{l-h=k}^{\ast} (k\cdot r_{h}) \left( r_{l} - \frac{k\cdot r_{l}}{\lvert k \rvert^{2}} k\right) + (k\cdot s_{h}) \left( s_{l} - \frac{k\cdot s_{l}}{\lvert k \rvert^{2}} k \right) \\
& + \sum_{h+l=k}^{\ast} (k\cdot \tilde{r}_{h}) \left( \tilde{r}_{l} - \frac{k\cdot \tilde{r}_{l}}{\lvert k \rvert^{2}} k\right) - (k\cdot \tilde{s}_{h}) \left( \tilde{s}_{l} - \frac{k\cdot \tilde{s}_{l}}{\lvert k \rvert^{2}} k \right) \\
&+ \sum_{h-l=k}^{\ast} (k\cdot \tilde{r}_{h}) \left( \tilde{r}_{l} - \frac{k\cdot \tilde{r}_{l}}{\lvert k \rvert^{2}} k\right) + (k\cdot \tilde{s}_{h}) \left( \tilde{s}_{l} - \frac{k\cdot \tilde{s}_{l}}{\lvert k \rvert^{2}} k \right) \\
&+ \sum_{l-h=k}^{\ast} (k\cdot \tilde{r}_{h}) \left( \tilde{r}_{l} - \frac{k\cdot \tilde{r}_{l}}{\lvert k \rvert^{2}} k\right) + (k\cdot \tilde{s}_{h}) \left( \tilde{s}_{l} - \frac{k\cdot \tilde{s}_{l}}{\lvert k \rvert^{2}} k \right) ] dt + q_{uk}^{s}d\beta_{ut}^{k},
\end{split} 
\end{equation*}
\begin{equation*}
\begin{split} 
d\tilde{r}_{k} =& [- \lvert k \rvert^{2} \tilde{r}_{k} + \sum_{h+l=k}^{\ast} (k\cdot r_{h}) \tilde{s}_{l} + (k\cdot s_{h}) \tilde{r}_{l} - \sum_{h-l=k}^{\ast} (k\cdot r_{h})\tilde{s}_{l} - (k\cdot s_{h}) \tilde{r}_{l} \\
+& \sum_{l-h=k}^{\ast} (k\cdot r_{h}) \tilde{s}_{l} - (k\cdot s_{h}) \tilde{r}_{l}  - \sum_{h+l=k}^{\ast} (k\cdot \tilde{r}_{h}) s_{l} + (k\cdot \tilde{s}_{h}) r_{l}\\
 +& \sum_{h-l=k}^{\ast} (k\cdot \tilde{r}_{h}) s_{l} - (k\cdot \tilde{s}_{h})r_{l} - \sum_{l-h=k}^{\ast} (k\cdot \tilde{r}_{h}) s_{l} - (k\cdot \tilde{s}_{h}) r_{l}]dt + q_{bk}^{r}d\beta_{bt}^{k},
\end{split} 
\end{equation*}
\begin{equation*}
\begin{split} 
d\tilde{s}_{k} =& [- \lvert k \rvert^{2} \tilde{s}_{k} - \sum_{h+l=k}^{\ast} (k\cdot r_{h}) \tilde{r}_{l} - (k\cdot s_{h}) \tilde{s}_{l} - \sum_{h-l=k}^{\ast} (k\cdot r_{h})\tilde{r}_{l} + (k\cdot s_{h}) \tilde{s}_{l} \\
-& \sum_{l-h=k}^{\ast} (k\cdot r_{h}) \tilde{r}_{l} + (k\cdot s_{h}) \tilde{s}_{l}  + \sum_{h+l=k}^{\ast} (k\cdot \tilde{r}_{h}) r_{l} - (k\cdot \tilde{s}_{h}) s_{l}\\
+& \sum_{h-l=k}^{\ast} (k\cdot \tilde{r}_{h}) r_{l} + (k\cdot \tilde{s}_{h})s_{l} + \sum_{l-h=k}^{\ast} (k\cdot \tilde{r}_{h}) r_{l} + (k\cdot \tilde{s}_{h}) s_{l}]dt + q_{bk}^{s}d\beta_{bt}^{k}.
\end{split} 
\end{equation*}
Accordingly we define for $i = 1, 2, 3$, 
\begin{equation}
\begin{split} 
F_{r_{k}^{i}} \triangleq& - \lvert k \rvert^{2} r_{k}^{i} + \sum_{h+l=k}^{\ast} (k\cdot r_{h}) \left( s_{l}^{i} - \frac{k\cdot s_{l}}{\lvert k \rvert^{2}} k_{i}\right) + (k\cdot s_{h}) \left( r_{l}^{i} - \frac{k\cdot r_{l}}{\lvert k \rvert^{2}} k_{i} \right) \\
&- \sum_{h-l=k}^{\ast} (k\cdot r_{h}) \left( s_{l}^{i} - \frac{k\cdot s_{l}}{\lvert k \rvert^{2}} k_{i}\right) - (k\cdot s_{h}) \left( r_{l}^{i} - \frac{k\cdot r_{l}}{\lvert k \rvert^{2}} k_{i} \right)   \\
&+ \sum_{l-h=k}^{\ast} (k\cdot r_{h}) \left( s_{l}^{i} - \frac{k\cdot s_{l}}{\lvert k \rvert^{2}} k_{i}\right) - (k\cdot s_{h}) \left( r_{l}^{i} - \frac{k\cdot r_{l}}{\lvert k \rvert^{2}} k_{i} \right)  \\
& - \sum_{h+l=k}^{\ast} (k\cdot \tilde{r}_{h}) \left( \tilde{s}_{l}^{i} - \frac{k\cdot \tilde{s}_{l}}{\lvert k \rvert^{2}} k_{i}\right) + (k\cdot \tilde{s}_{h}) \left( \tilde{r}_{l}^{i} - \frac{k\cdot \tilde{r}_{l}}{\lvert k \rvert^{2}} k_{i} \right)  \\
&+ \sum_{h-l=k}^{\ast} (k\cdot \tilde{r}_{h}) \left( \tilde{s}_{l}^{i} - \frac{k\cdot \tilde{s}_{l}}{\lvert k \rvert^{2}} k_{i}\right) - (k\cdot \tilde{s}_{h}) \left( \tilde{r}_{l}^{i} - \frac{k\cdot \tilde{r}_{l}}{\lvert k \rvert^{2}} k_{i} \right)   \\
&- \sum_{l-h=k}^{\ast} (k\cdot \tilde{r}_{h}) \left( \tilde{s}_{l}^{i} - \frac{k\cdot \tilde{s}_{l}}{\lvert k \rvert^{2}} k_{i}\right) - (k\cdot \tilde{s}_{h}) \left( \tilde{r}_{l}^{i} - \frac{k\cdot \tilde{r}_{l}}{\lvert k \rvert^{2}} k_{i} \right),  
\end{split}
\end{equation} 
\begin{equation}
\begin{split} 
F_{s_{k}^{i}} \triangleq & - \lvert k \rvert^{2} s_{k}^{i} - \sum_{h+l=k}^{\ast} (k\cdot r_{h}) \left( r_{l}^{i} - \frac{k\cdot r_{l}}{\lvert k \rvert^{2}} k_{i}\right) - (k\cdot s_{h}) \left( s_{l}^{i} - \frac{k\cdot s_{l}}{\lvert k \rvert^{2}} k_{i} \right) \\
&- \sum_{h-l=k}^{\ast} (k\cdot r_{h}) \left( r_{l}^{i} - \frac{k\cdot r_{l}}{\lvert k \rvert^{2}} k_{i}\right) + (k\cdot s_{h}) \left(s_{l}^{i} - \frac{k\cdot s_{l}}{\lvert k \rvert^{2}} k_{i} \right)   \\
&- \sum_{l-h=k}^{\ast} (k\cdot r_{h}) \left( r_{l}^{i} - \frac{k\cdot r_{l}}{\lvert k \rvert^{2}} k_{i}\right) + (k\cdot s_{h}) \left( s_{l}^{i} - \frac{k\cdot s_{l}}{\lvert k \rvert^{2}} k_{i} \right)   \\
& + \sum_{h+l=k}^{\ast} (k\cdot \tilde{r}_{h}) \left( \tilde{r}_{l}^{i} - \frac{k\cdot \tilde{r}_{l}}{\lvert k \rvert^{2}} k_{i}\right) - (k\cdot \tilde{s}_{h}) \left( \tilde{s}_{l}^{i} - \frac{k\cdot \tilde{s}_{l}}{\lvert k \rvert^{2}} k_{i} \right)  \\
&+ \sum_{h-l=k}^{\ast} (k\cdot \tilde{r}_{h}) \left( \tilde{r}_{l}^{i} - \frac{k\cdot \tilde{r}_{l}}{\lvert k \rvert^{2}} k_{i}\right) + (k\cdot \tilde{s}_{h}) \left( \tilde{s}_{l}^{i} - \frac{k\cdot \tilde{s}_{l}}{\lvert k \rvert^{2}} k_{i} \right)  \\
&+ \sum_{l-h=k}^{\ast} (k\cdot \tilde{r}_{h}) \left( \tilde{r}_{l}^{i} - \frac{k\cdot \tilde{r}_{l}}{\lvert k \rvert^{2}} k_{i}\right) + (k\cdot \tilde{s}_{h}) \left( \tilde{s}_{l}^{i} - \frac{k\cdot \tilde{s}_{l}}{\lvert k \rvert^{2}} k_{i} \right),  
\end{split}
\end{equation} 
\begin{equation}
\begin{split} 
\tilde{F}_{\tilde{r}_{k}^{i}} \triangleq& - \lvert k \rvert^{2} \tilde{r}_{k}^{i} + \sum_{h+l=k}^{\ast} (k\cdot r_{h}) \tilde{s}_{l}^{i} + (k\cdot s_{h}) \tilde{r}_{l}^{i} - \sum_{h-l=k}^{\ast} (k\cdot r_{h})\tilde{s}_{l}^{i} - (k\cdot s_{h}) \tilde{r}_{l}^{i}  \\
&+ \sum_{l-h=k}^{\ast} (k\cdot r_{h}) \tilde{s}_{l}^{i} - (k\cdot s_{h}) \tilde{r}_{l}^{i}  \\
& - \sum_{h+l=k}^{\ast} (k\cdot \tilde{r}_{h}) s_{l}^{i} + (k\cdot \tilde{s}_{h}) r_{l}^{i} + \sum_{h-l=k}^{\ast} (k\cdot \tilde{r}_{h}) s_{l}^{i} - (k\cdot \tilde{s}_{h})r_{l}^{i}  \\
&- \sum_{l-h=k}^{\ast} (k\cdot \tilde{r}_{h}) s_{l}^{i} - (k\cdot \tilde{s}_{h}) r_{l}^{i}, 
\end{split}
\end{equation} 
\begin{equation}
\begin{split} 
\tilde{F}_{\tilde{s}_{k}^{i}} \triangleq& - \lvert k \rvert^{2} \tilde{s}_{k}^{i} - \sum_{h+l=k}^{\ast} (k\cdot r_{h}) \tilde{r}_{l}^{i} - (k\cdot s_{h}) \tilde{s}_{l}^{i} - \sum_{h-l=k}^{\ast} (k\cdot r_{h})\tilde{r}_{l}^{i} + (k\cdot s_{h}) \tilde{s}_{l}^{i} \\
&- \sum_{l-h=k}^{\ast} (k\cdot r_{h}) \tilde{r}_{l}^{i} + (k\cdot s_{h}) \tilde{s}_{l}^{i}  \\
& + \sum_{h+l=k}^{\ast} (k\cdot \tilde{r}_{h}) r_{l}^{i} - (k\cdot \tilde{s}_{h}) s_{l}^{i} + \sum_{h-l=k}^{\ast} (k\cdot \tilde{r}_{h}) r_{l}^{i} + (k\cdot \tilde{s}_{h})s_{l}^{i}  \\
&+ \sum_{l-h=k}^{\ast} (k\cdot \tilde{r}_{h}) r_{l}^{i} + (k\cdot \tilde{s}_{h}) s_{l}^{i},
\end{split}
\end{equation} 
so that we may rewrite 
\begin{equation*}
\begin{split} 
& dr_{k} - F_{r_{k}} (r, s, \tilde{r}, \tilde{s}) dt = q_{uk}^{r} d\beta_{ut}^{k}, \hspace{3mm} ds_{k} - F_{s_{k}} (r, s, \tilde{r}, \tilde{s}) dt = q_{uk}^{s} d\beta_{ut}^{k},\\
& d\tilde{r}_{k} - \tilde{F}_{\tilde{r}_{k}}(r, s, \tilde{r}, \tilde{s}) dt = q_{bk}^{r} d\beta_{bt}^{k}, \hspace{3mm} d\tilde{s}_{k} - \tilde{F}_{\tilde{s}_{k}}(r, s, \tilde{r}, \tilde{s}) dt = q_{bk}^{s} d\beta_{bt}^{k}. 
\end{split} 
\end{equation*}
The solution $(r, \tilde{r}, s, \tilde{s})(t)$ is a Markov process of which its state space is a linear subspace $\displaystyle{U \triangleq \oplus_{k \in \tilde{\mathcal{K}}} (R_{k} \oplus \tilde{\mathcal{R}}_{k} \oplus S_{k} \oplus \tilde{S}_{k}) \subset \mathbb{R}^{12D}}$ where $D = \#\mathcal{K}$ and
\begin{subequations}
\begin{align}
& R_{k} = \{(r, \tilde{r}, s, \tilde{s}) \in \mathbb{R}^{12D}: r_{k} \cdot k = 0, s_{k} = \tilde{r}_{k} = \tilde{s}_{k} = 0,\nonumber\\
& \hspace{40mm} r_{h} = \tilde{r}_{h} = s_{h} = \tilde{s}_{h} = 0, h \neq k \},\\
& S_{k} = \{(r, \tilde{r}, s, \tilde{s}) \in \mathbb{R}^{12D}: s_{k} \cdot k = 0, r_{k} = \tilde{r}_{k} = \tilde{s}_{k} = 0,\nonumber\\
& \hspace{40mm} r_{h} = \tilde{r}_{h} = s_{h} = \tilde{s}_{h} = 0, h \neq k \},\\
& \tilde{R}_{k} = \{(r, \tilde{r}, s, \tilde{s}) \in \mathbb{R}^{12D}: \tilde{r}_{k} \cdot k = 0, r_{k} = s_{k} = \tilde{s}_{k} = 0,\nonumber\\
& \hspace{40mm} r_{h} = \tilde{r}_{h} = s_{h} = \tilde{s}_{h} = 0, h \neq k \},\\
& \tilde{S}_{k} = \{(r, \tilde{r}, s, \tilde{s}) \in \mathbb{R}^{12D}: \tilde{s}_{k} \cdot k = 0, r_{k} = s_{k} = \tilde{r}_{k} = 0, \nonumber\\
& \hspace{40mm} r_{h} = \tilde{r}_{h} = s_{h} = \tilde{s}_{h} = 0, h \neq k \}.
\end{align}
\end{subequations}
We define the Lie algebra $\mathcal{U}$ corresponding to the state space $U$ as 
\begin{equation}
\begin{split} 
\mathcal{U} \triangleq& \{G: G = \sum_{k \in \tilde{\mathcal{K}}} G_{r_{k}^{i}} \frac{\partial}{\partial r_{k}^{i}} + \tilde{G}_{\tilde{r}_{k}^{i}} \frac{\partial}{\partial \tilde{r}_{k}^{i}} + G_{s_{k}^{i}}\frac{\partial}{\partial s_{k}^{i}} + \tilde{G}_{\tilde{s}_{k}^{i}} \frac{\partial}{\partial \tilde{s}_{k}^{i}},\\
& \hspace{25mm} k \cdot G_{r_{k}^{i}} = k \cdot G_{s_{k}^{i}} = k \cdot G_{\tilde{r}_{k}^{i}} = k \cdot G_{\tilde{s}_{k}^{i}} = 0 \}
\end{split}
\end{equation}
and its subspace of constant vector fields $\mathcal{U}_{k} \triangleq \mathcal{R}_{k} \oplus \tilde{\mathcal{R}}_{k} \oplus \mathcal{S}_{k} \oplus \tilde{\mathcal{S}}_{k} \subset \mathcal{U}$ where 
\begin{equation*}
\begin{split} 
& \mathcal{R}_{k} \triangleq \{\sum_{i=1}^{3} r_{k}^{i} \frac{\partial}{\partial r_{k}^{i}}: r_{k} \cdot k = 0 \}, \hspace{3mm} \mathcal{S}_{k} \triangleq \{\sum_{i=1}^{3} s_{k}^{i} \frac{\partial}{\partial s_{k}^{i}}: s_{k} \cdot k = 0 \},\\
& \tilde{\mathcal{R}}_{k} \triangleq \{\sum_{i=1}^{3} \tilde{r}_{k}^{i} \frac{\partial}{\partial \tilde{r}_{k}^{i}}: \tilde{r}_{k} \cdot k = 0 \}, \hspace{3mm} \tilde{\mathcal{S}}_{k} \triangleq \{\sum_{i=1}^{3} \tilde{s}_{k}^{i} \frac{\partial}{\partial \tilde{s}_{k}^{i}}: \tilde{s}_{k} \cdot k = 0 \}.
\end{split} 
\end{equation*}
We now derive sufficient conditions on the set $\mathcal{N}$ of forced modes so that the algebra generated by the vector fields $\{F_{0}\} \cup \mathcal{U}_{k}, k \in \mathcal{N}$, where 
\begin{equation}
F_{0} \triangleq \sum_{k \in \tilde{\mathcal{K}}: i = 1, 2, 3} F_{r_{k}^{i}} \frac{\partial}{\partial r_{k}^{i}} + F_{s_{k}^{i}} \frac{\partial}{\partial s_{k}^{i}} + \tilde{F}_{\tilde{r}_{k}^{i}} \frac{\partial}{\partial \tilde{r}_{k}^{i}} + \tilde{F}_{\tilde{s}_{k}^{i}} \frac{\partial}{\partial \tilde{s}_{k}^{i}},
\end{equation}
contains all constant vector fields of $\mathcal{U}$, which implies that the H$\ddot{\mathrm{o}}$rmander's condition for hypoellipticity is satisfied. Thus, we now need to compute the Lie brackets of the form $[[F_{0}, V], W]$ for general constant vector fields $V, W$. 

\begin{proposition}
Suppose $m, n \in \tilde{\mathcal{K}}, V \in \mathcal{U}_{m}, W \in \mathcal{U}_{n}$ where 
\begin{equation}
V \triangleq \sum_{j=1}^{3} v_{j}^{r} \frac{\partial}{\partial r_{m}^{j}} + v_{j}^{s} \frac{\partial}{\partial s_{m}^{j}} + \tilde{v}_{j}^{r} \frac{\partial}{\partial \tilde{r}_{m}^{j}} + \tilde{v}_{j}^{s} \frac{\partial}{\partial \tilde{s}_{m}^{j}},
\end{equation}
\begin{equation}
 W \triangleq \sum_{l=1}^{3} w_{l}^{r} \frac{\partial}{\partial r_{n}^{l}} + w_{l}^{s} \frac{\partial}{\partial s_{n}^{l}} + \tilde{w}_{l}^{r} \frac{\partial}{\partial \tilde{r}_{n}^{l}} + \tilde{w}_{l}^{s} \frac{\partial}{\partial \tilde{s}_{n}^{l}}. 
\end{equation}
If $k = m +n, h = n- m, g = m- n$, then 
\small
\begin{equation}
\begin{split} 
&[[F_{0}, V], W]\\
=& [(v^{s} \cdot k) P_{k} (w^{r}) + (w^{r} \cdot k) P_{k}(v^{s}) + (v^{r} \cdot k) P_{k}(w^{s}) + (w^{s}\cdot k) P_{k}(v^{r})\\
&- (\tilde{v}^{s}\cdot k) P_{k}(\tilde{w}^{r}) - (\tilde{w}^{r}\cdot k) P_{k}(\tilde{v}^{s}) - (\tilde{v}^{r}\cdot k) P_{k}(\tilde{w}^{s}) - (\tilde{w}^{s}\cdot k) P_{k}(\tilde{v}^{r})] \cdot \frac{\partial}{\partial r_{k}}\\
&+[-(v^{r} \cdot k) P_{k} (w^{r}) - (w^{r} \cdot k) P_{k}(v^{r}) + (v^{s} \cdot k) P_{k}(w^{s}) + (w^{s}\cdot k) P_{k}(v^{s}) \\
&+ (\tilde{v}^{r}\cdot k) P_{k}(\tilde{w}^{r}) + (\tilde{w}^{r}\cdot k) P_{k}(\tilde{v}^{r}) - (\tilde{v}^{s}\cdot k) P_{k}(\tilde{w}^{s}) - (\tilde{w}^{s}\cdot k) P_{k}(\tilde{v}^{s})] \cdot \frac{\partial}{\partial s_{k}}  \\
&+[-(v^{s} \cdot h) P_{h} (w^{r}) - (w^{r} \cdot h) P_{h}(v^{s}) + (v^{r} \cdot h) P_{h}(w^{s}) + (w^{s}\cdot h) P_{h}(v^{r})\\
&+ (\tilde{v}^{s}\cdot h) P_{h}(\tilde{w}^{r}) + (\tilde{w}^{r}\cdot h) P_{h}(\tilde{v}^{s}) - (\tilde{v}^{r}\cdot h) P_{h}(\tilde{w}^{s}) - (\tilde{w}^{s}\cdot h) P_{h}(\tilde{v}^{r})] \cdot \frac{\partial}{\partial r_{h}}  \\
&+[-(v^{r} \cdot h) P_{h} (w^{r}) - (w^{r} \cdot h) P_{h}(v^{r}) - (v^{s} \cdot h) P_{h}(w^{s}) - (w^{s}\cdot h) P_{h}(v^{s}) \\
&+ (\tilde{v}^{r}\cdot h) P_{h}(\tilde{w}^{r}) + (\tilde{w}^{r}\cdot h) P_{h}(\tilde{v}^{r}) + (\tilde{v}^{s}\cdot h) P_{h}(\tilde{w}^{s}) + (\tilde{w}^{s}\cdot h) P_{h}(\tilde{v}^{s})] \cdot \frac{\partial}{\partial s_{h}} \\
&+[(v^{s} \cdot g) P_{g} (w^{r}) + (w^{r} \cdot g) P_{g}(v^{s}) - (v^{r} \cdot g) P_{g}(w^{s}) - (w^{s}\cdot g) P_{g}(v^{r}) \\
&- (\tilde{v}^{s}\cdot g) P_{g}(\tilde{w}^{r}) - (\tilde{w}^{r}\cdot g) P_{g}(\tilde{v}^{s}) + (\tilde{v}^{r}\cdot g) P_{g}(\tilde{w}^{s}) + (\tilde{w}^{s}\cdot g) P_{g}(\tilde{v}^{r})] \cdot \frac{\partial}{\partial r_{g}}  \\
&+[-(v^{r} \cdot g) P_{g} (w^{r}) - (w^{r} \cdot g) P_{g}(v^{r}) - (v^{s} \cdot g) P_{g}(w^{s}) - (w^{s}\cdot g) P_{g}(v^{s}) \\
&+ (\tilde{v}^{r}\cdot g) P_{g}(\tilde{w}^{r}) + (\tilde{w}^{r}\cdot g) P_{g}(\tilde{v}^{r}) + (\tilde{v}^{s}\cdot g) P_{g}(\tilde{w}^{s}) + (\tilde{w}^{s}\cdot g) P_{g}(\tilde{v}^{s})] \cdot \frac{\partial}{\partial s_{g}} \\
&+ [-(\tilde{v}^{s}\cdot k) w^{r} + (w^{r}\cdot k) \tilde{v}^{s} - (\tilde{v}^{r} \cdot k) w^{s} + (w^{s} \cdot k) \tilde{v}^{r} \\
&+ (v^{s}\cdot k) \tilde{w}^{r} - (\tilde{w}^{r}\cdot k) v^{s} + (v^{r} \cdot k) \tilde{w}^{s} - (\tilde{w}^{s} \cdot k) v^{r}] \cdot \frac{\partial}{\partial \tilde{r}_{k}} \\
&+ [(\tilde{v}^{r}\cdot k) w^{r} - (w^{r}\cdot k) \tilde{v}^{r} - (\tilde{v}^{s} \cdot k) w^{s} + (w^{s} \cdot k) \tilde{v}^{s} \\
&- (v^{r}\cdot k) \tilde{w}^{r} + (\tilde{w}^{r}\cdot k) v^{r} + (v^{s} \cdot k) \tilde{w}^{s} - (\tilde{w}^{s} \cdot k) v^{s}] \cdot \frac{\partial}{\partial \tilde{s}_{k}} \\
&+ [(\tilde{v}^{s}\cdot h) w^{r} - (w^{r}\cdot h) \tilde{v}^{s} - (\tilde{v}^{r} \cdot h) w^{s} + (w^{s} \cdot h) \tilde{v}^{r} \\
&- (v^{s}\cdot h) \tilde{w}^{r} + (\tilde{w}^{r}\cdot h) v^{s} + (v^{r} \cdot h) \tilde{w}^{s} - (\tilde{w}^{s} \cdot h) v^{r}] \cdot \frac{\partial}{\partial \tilde{r}_{h}}\\
&+ [(\tilde{v}^{r}\cdot h) w^{r} - (w^{r}\cdot h) \tilde{v}^{r} + (\tilde{v}^{s} \cdot h) w^{s} - (w^{s} \cdot h) \tilde{v}^{s} \\
&- (v^{r}\cdot h) \tilde{w}^{r} + (\tilde{w}^{r}\cdot h) v^{r} - (v^{s} \cdot h) \tilde{w}^{s} + (\tilde{w}^{s} \cdot h) v^{s}] \cdot \frac{\partial}{\partial \tilde{s}_{h}}\\
&+ [-(\tilde{v}^{s}\cdot g) w^{r} + (w^{r}\cdot g) \tilde{v}^{s} + (\tilde{v}^{r} \cdot g) w^{s} - (w^{s} \cdot g) \tilde{v}^{r} \\
&+ (v^{s}\cdot g) \tilde{w}^{r} - (\tilde{w}^{r}\cdot g) v^{s} - (v^{r} \cdot g) \tilde{w}^{s} + (\tilde{w}^{s} \cdot g) v^{r}] \cdot \frac{\partial}{\partial \tilde{r}_{g}} \\
&+ [(\tilde{v}^{r}\cdot g) w^{r} - (w^{r}\cdot g) \tilde{v}^{r} + (\tilde{v}^{s} \cdot g) w^{s} - (w^{s} \cdot g) \tilde{v}^{s} \\
&- (v^{r}\cdot g) \tilde{w}^{r} + (\tilde{w}^{r}\cdot g) v^{r} - (v^{s} \cdot g) \tilde{w}^{s} + (\tilde{w}^{s} \cdot g) v^{s}] \cdot \frac{\partial}{\partial \tilde{s}_{g}}   
\end{split}
\end{equation} 
\normalsize 
where $\displaystyle{P_{k}(v)_{i} \triangleq v_{i} - \frac{k_{i}}{\lvert k \rvert^{2}} (v\cdot k)}$ is the projection onto the space orthogonal to the vector $k$, and the terms corresponding to indices not in $\tilde{\mathcal{K}}$ are zero. 
\end{proposition}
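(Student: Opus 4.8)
The plan is to exploit that $V\in\mathcal U_m$ and $W\in\mathcal U_n$ are \emph{constant} vector fields, their only nonzero components being the derivatives along the $\mathbb R^{3}$-blocks of coordinates attached to the modes $m$ and $n$. For a constant field one has $[F_0,V]=-(V\cdot\nabla)F_0$, and since each component of $F_0$ is quadratic (with a linear part) in the coordinates, $[F_0,V]$ is affine; bracketing again with the still-constant $W$ gives
\[
[[F_0,V],W]=(W\cdot\nabla)(V\cdot\nabla)F_0,
\]
a \emph{constant} vector field whose component along a coordinate $\alpha$ equals $\sum_{j,q}v_jw_q\,\partial^2F_0^{\alpha}/\partial\xi_j\,\partial\zeta_q$ --- the mixed Hessian of $F_0$, evaluated on $V$ against the mode-$m$ coordinates $\xi$ and on $W$ against the mode-$n$ coordinates $\zeta$. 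In particular the linear part $-\lvert k\rvert^{2}(\cdot)$ of each component of $F_0$ is annihilated, only the bilinear terms survive, and the fact that the result again lies in $\mathcal U$ is automatic since $F_0,V,W$ are all tangent to the divergence-free subspace. The computation thus reduces to differentiating, term by term, the explicit formulas (24)--(27) for $F_{r_k^i},F_{s_k^i},\tilde F_{\tilde r_k^i},\tilde F_{\tilde s_k^i}$.

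The second step is to record the single elementary rule that drives everything. Each bilinear monomial of a velocity component ($F_{r_k^i}$ or $F_{s_k^i}$) has the shape $\pm(k\cdot a_h)\,P_k(b_l)_i$ with $(a,b)$ drawn from $\{r,s,\tilde r,\tilde s\}$ and $(h,l)$ obeying one of $h+l=k$, $h-l=k$, $l-h=k$, where $P_k(v)_i=v_i-k_i(v\cdot k)/\lvert k\rvert^{2}$; each monomial of a magnetic component ($\tilde F_{\tilde r_k^i}$ or $\tilde F_{\tilde s_k^i}$) has the same shape but \emph{without} the projection, namely $\pm(k\cdot a_h)\,b_l^{i}$. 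The mixed second derivative $\partial^2/\partial a_m^{p}\,\partial b_n^{q}$ of such a monomial is nonzero exactly when $\{h,l\}=\{m,n\}$: the assignment $(h,l)=(m,n)$, contracted against the components of $V$ and $W$, produces $(v^{a}\cdot k)\,P_k(w^{b})_i$ (respectively $(v^{a}\cdot k)\,w^{b}_i$), and $(h,l)=(n,m)$ produces $(w^{a}\cdot k)\,P_k(v^{b})_i$. The mode constraint pins down the recipient block: $h+l=k$ forces $k=m+n$; inside $\sum_{h-l=k}^{\ast}$ the pair $(m,n)$ feeds $k=m-n=g$ while $(n,m)$ feeds $k=n-m=h$; inside $\sum_{l-h=k}^{\ast}$ the pair $(m,n)$ feeds $k=n-m=h$ while $(n,m)$ feeds $k=m-n=g$. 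Hence the bracket is supported only on the $r,s,\tilde r,\tilde s$ blocks at the three modes $m+n,\,n-m,\,m-n$, with the convention that whenever one of them fails to lie in $\tilde{\mathcal K}$ (for instance $g=h=0$ when $m=n$) the corresponding block is absent, since every internal sum of $F_0$, and $F_0$ itself, ranges only over $\tilde{\mathcal K}$; this is exactly the ``indices not in $\tilde{\mathcal K}$ are zero'' clause.

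Assembling the contributions yields (33). For the $r_{m+n}$ block one collects the four velocity terms coming from the $\sum_{h+l=k}^{\ast}$ part of $F_{r_k^i}$ (with $k=m+n$) together with the four magnetic terms, the opposite relative sign between the two groups reflecting the $-\mathcal P((u\cdot\nabla)u)+\mathcal P((b\cdot\nabla)b)$ structure of $du$ and being readable directly off (24); the $s_{m+n}$ block comes out of $F_{s_k^i}$ in the same way. For the $r_{n-m},s_{n-m}$ and $r_{m-n},s_{m-n}$ blocks the procedure is identical, except that each such block collects one contribution from $\sum_{h-l=k}^{\ast}$ and one from $\sum_{l-h=k}^{\ast}$, which is why the two summands there do not sit in the fully $V\leftrightarrow W$-symmetric arrangement seen for $m+n$, their relative signs being inherited from the $\mp$ prefixes of those two sums in (24)--(25). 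Finally the eight magnetic output blocks $\tilde r_{m+n},\tilde s_{m+n},\tilde r_{n-m},\dots$ come out the same way from $\tilde F_{\tilde r_k^i},\tilde F_{\tilde s_k^i}$ in (26)--(27); there is no $P_k$ here, the relevant monomials being $r_h\tilde s_l,\ s_h\tilde r_l,\ \tilde r_h s_l,\ \tilde s_h r_l$ and their $h\leftrightarrow l$ partners, so the mixed derivative yields the bare vectors $w^{r},\tilde v^{s},\dots$ of the last eight lines of (33), with signs dictated by whether a monomial stems from $-(u\cdot\nabla)b$ or $+(b\cdot\nabla)u$.

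I expect the main obstacle to be organizational rather than conceptual: no single estimate is hard, but the quadratic part of $F_0$ carries many distinct bilinear monomials across the families $r,s,\tilde r,\tilde s$, each splitting into two ordered-index assignments and feeding one of the three output modes with a prescribed sign, so the real work is a disciplined enumeration that simultaneously tracks (i) the $r/s/\tilde r/\tilde s$ labelling, (ii) the fixed $\pm$ in front of $\sum_{h+l=k}^{\ast},\sum_{h-l=k}^{\ast},\sum_{l-h=k}^{\ast}$, and (iii) the sign conventions from $-\mathcal P((u\cdot\nabla)u)+\mathcal P((b\cdot\nabla)b)$ and $-(u\cdot\nabla)b+(b\cdot\nabla)u$. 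Once the bookkeeping is fixed by the rules above, formula (33), the identification of $P_k$ with the orthogonal projection off $k$, and the vanishing of terms indexed outside $\tilde{\mathcal K}$ all follow by direct inspection of (24)--(27).
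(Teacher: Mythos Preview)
Your proposal is correct and follows essentially the same approach as the paper: both recognize that for constant $V,W$ the double bracket reduces to the Hessian of $F_0$ contracted against $V$ and $W$, and both then read off the result from the explicit bilinear structure of (24)--(27). The paper carries this out by tabulating all sixteen first partials (34)--(49) and then the nonvanishing second partials (50)--(65) before assembling them into (66), whereas you streamline the bookkeeping by working directly with the prototype monomial $\pm(k\cdot a_h)P_k(b_l)_i$ and tracking which of the three constraints $h+l=k$, $h-l=k$, $l-h=k$ routes each ordered pair $(m,n)$ or $(n,m)$ to the output modes $m+n$, $n-m$, $m-n$; the mathematical content is identical.
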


\begin{proof}
We first compute the derivatives of the components of $F_{0}$ in (30) using (24), (25), (26), (27): 
\begin{equation}
\begin{split} 
\frac{\partial F_{r_{k}^{i}}}{\partial r_{m}^{j}} 
=& -\lvert k \rvert^{2} \delta_{i,j} \delta_{k,m} + k_{j}\left(s_{k-m}^{i} - \frac{k\cdot s_{k-m}}{\lvert k \rvert^{2}} k_{i}\right) + k\cdot s_{k-m}\left( \delta_{i,j} - \frac{k_{j}k_{i}}{\lvert k \rvert^{2}}\right) \\
&- k_{j}\left(s_{m-k}^{i} - \frac{k\cdot s_{m-k}}{\lvert k \rvert^{2}} k_{i}\right) + k\cdot s_{k+m}\left( \delta_{i,j} - \frac{k_{j}k_{i}}{\lvert k \rvert^{2}}\right)  \\
&+ k_{j}\left(s_{k+m}^{i} - \frac{k\cdot s_{k+m}}{\lvert k \rvert^{2}} k_{i}\right) - k\cdot s_{m-k}\left( \delta_{i,j} - \frac{k_{j}k_{i}}{\lvert k \rvert^{2}}\right) \\
=& -\lvert k \rvert^{2} \delta_{i,j} \delta_{k,m} + k_{j}(s_{k-m}^{i} - s_{m-k}^{i} + s_{k+m}^{i}) \\
&+ k\cdot (s_{k-m} - s_{m-k} + s_{k+m}) \left(\delta_{i,j} - 2\frac{k_{i}k_{j}}{\lvert k \rvert^{2}}\right),
\end{split}
\end{equation} 
where e.g. 
\begin{equation*}
\delta_{i,j} = \begin{cases}
1 & \text{ if } i = j, \\
0 & \text{ if } i \neq j.
\end{cases}
\end{equation*}
Similarly we may compute 
\begin{equation}
\begin{split} 
\frac{\partial F_{r_{k}^{i}}}{\partial s_{m}^{j}} =& k_{j} (r_{k-m}^{i} + r_{m-k}^{i} - r_{m+k}^{i})\\
&+ k\cdot (r_{k-m} + r_{m-k}- r_{k+m}) \left(\delta_{i,j} - 2 \frac{k_{i}k_{j}}{\lvert k\rvert^{2}} \right), 
\end{split}
\end{equation} 
\begin{equation}
\begin{split} 
\frac{\partial F_{r_{k}^{i}}}{\partial \tilde{r}_{m}^{j}} =& k_{j}(-\tilde{s}_{k-m}^{i} + \tilde{s}_{m-k}^{i} - \tilde{s}_{m+k}^{i})\\
& + k\cdot (-\tilde{s}_{k-m} + \tilde{s}_{m-k} - \tilde{s}_{k+m})\left(\delta_{i,j} - 2 \frac{k_{i}k_{j}}{\lvert k\rvert^{2}}\right), 
\end{split}
\end{equation} 
\begin{equation}
\begin{split} 
\frac{\partial F_{r_{k}^{i}}}{\partial \tilde{s}_{m}^{j}} =& k_{j} (-\tilde{r}_{k-m}^{i} - \tilde{r}_{m-k}^{i} + \tilde{r}_{m+k}^{i}) \\
&+ k\cdot (-\tilde{r}_{k-m} - \tilde{r}_{m-k} + \tilde{r}_{k+m}) \left(\delta_{i,j} - 2 \frac{k_{i}k_{j}}{\lvert k\vert^{2}}\right),  
\end{split}
\end{equation} 
\begin{equation}
\begin{split} 
\frac{\partial F_{s_{k}^{i}}}{\partial r_{m}^{j}} 
=& k_{j}(-r_{k-m}^{i} - r_{m-k}^{i} - r_{m+k}^{i})\\
&+ k\cdot (-r_{k-m} - r_{m-k} - r_{m+k} ) \left(\delta_{i,j} - 2 \frac{k_{i}k_{j}}{\lvert k\rvert^{2}}\right), 
\end{split}
\end{equation} 
\begin{equation}
\begin{split} 
\frac{\partial F_{s_{k}^{i}}}{\partial s_{m}^{j}} =& - \lvert k\rvert^{2} \delta_{i,j}\delta_{k,m} + k_{j}(s_{k-m}^{i} - s_{m-k}^{i} - s_{m+k}^{i})\\
&+ k \cdot (s_{k-m} - s_{m-k} - s_{k+m}) \left(\delta_{i,j} - 2 \frac{k_{i}k_{j}}{\lvert k\rvert^{2}} \right),  
\end{split}
\end{equation} 
\begin{equation}
\begin{split} 
\frac{\partial F_{s_{k}^{i}}}{\partial \tilde{r}_{m}^{j}} 
=& k_{j} (\tilde{r}_{k-m}^{i} + \tilde{r}_{m-k}^{i} + \tilde{r}_{m+k}^{i})\\
& + k \cdot (\tilde{r}_{k-m}+ \tilde{r}_{m-k} + \tilde{r}_{m+k} ) \left(\delta_{i,j} - 2 \frac{k_{j}k_{i}}{\lvert k\rvert^{2}} \right), 
\end{split}
\end{equation} 
\begin{equation}
\begin{split}
\frac{\partial F_{s_{k}^{i}}}{\partial \tilde{s}_{m}^{j}} =& k_{j} (-\tilde{s}_{k-m}^{i} + \tilde{s}_{m-k}^{i} + \tilde{s}_{m+k}^{i})\\
& + k \cdot (-\tilde{s}_{k-m}+ \tilde{s}_{m-k} + \tilde{s}_{m+k} ) \left(\delta_{i,j} - 2 \frac{k_{j}k_{i}}{\lvert k\rvert^{2}} \right),  
\end{split}
\end{equation} 
\begin{equation}
\frac{\partial \tilde{F}_{\tilde{r}_{k}^{i}}}{\partial r_{m}^{j}} 
= k_{j} (\tilde{s}_{k-m}^{i} - \tilde{s}_{m-k}^{i} + \tilde{s}_{m+k}^{i}) + k\cdot (-\tilde{s}_{k-m} + \tilde{s}_{m-k} - \tilde{s}_{m+k})\delta_{i,j},
\end{equation}
\begin{equation}
\frac{\partial \tilde{F}_{\tilde{r}_{k}^{i}}}{\partial s_{m}^{j}} = k_{j} (\tilde{r}_{k-m}^{i} + \tilde{r}_{m-k}^{i} - \tilde{r}_{m+k}^{i}) + k\cdot (-\tilde{r}_{k-m}- \tilde{r}_{m-k} + \tilde{r}_{m+k} )\delta_{i,j}, 
\end{equation}
\begin{equation}
\begin{split} 
\frac{\partial \tilde{F}_{\tilde{r}_{k}^{i}}}{\partial \tilde{r}_{m}^{j}} 
=& -\lvert k\rvert^{2} \delta_{i,j}\delta_{k,m}\\
&+ k\cdot (s_{k-m} - s_{m-k} + s_{k+m}) \delta_{i,j} + k_{j}(-s_{k-m}^{i} + s_{m-k}^{i} - s_{m+k}^{i}),
\end{split}
\end{equation} 
\begin{equation}
\frac{\partial \tilde{F}_{\tilde{r}_{k}^{i}}}{\partial \tilde{s}_{m}^{j}} = k\cdot (r_{k-m} + r_{m-k}- r_{k+m}) \delta_{i,j} + k_{j}(-r_{k-m}^{i} - r_{m-k}^{i} + r_{m+k}^{i}), 
\end{equation}
\begin{equation}
\frac{\partial \tilde{F}_{\tilde{s}_{k}^{i}}}{\partial r_{m}^{j}} = k_{j} (-\tilde{r}_{k-m}^{i} - \tilde{r}_{m-k}^{i} - \tilde{r}_{m+k}^{i}) + k\cdot (\tilde{r}_{k-m}+ \tilde{r}_{m-k} + \tilde{r}_{k+m} )\delta_{i,j}, 
\end{equation}
\begin{equation}
\frac{\partial \tilde{F}_{\tilde{s}_{k}^{i}}}{\partial s_{m}^{j}} = k_{j} (\tilde{s}_{k-m}^{i} - \tilde{s}_{m-k}^{i} - \tilde{s}_{m+k}^{i}) + k\cdot (-\tilde{s}_{k-m} + \tilde{s}_{m-k}+ \tilde{s}_{k+m} )\delta_{i,j}, 
\end{equation}
\begin{equation}
\frac{\partial \tilde{F}_{\tilde{s}_{k}^{i}}}{\partial \tilde{r}_{m}^{j}} = k\cdot (-r_{k-m} - r_{m-k}- r_{k+m} )\delta_{i,j}  + k_{j} (r_{k-m}^{i} + r_{m-k}^{i} + r_{m+k}^{i}), 
\end{equation}
\begin{equation}
\begin{split} 
\frac{\partial \tilde{F}_{\tilde{s}_{k}^{i}}}{\partial \tilde{s}_{m}^{j}} =& -\lvert k\rvert^{2} \delta_{i,j} \delta_{k,m}\\
& + k\cdot (s_{k-m}  - s_{m-k}- s_{k+m})\delta_{i,j}  + k_{j} (-s_{k-m}^{i} + s_{m-k}^{i} + s_{m+k}^{i}).  
\end{split}
\end{equation}

Next, we must compute the second derivatives: denoting $\displaystyle{\alpha_{j,l}(i,k) \triangleq k_{j} \delta_{i,l}+ k_{l} \delta_{i,j}}$ $\displaystyle{ - 2 \frac{k_{i}k_{j}k_{l}}{\lvert k\rvert^{2}}}$, by (34), 
\begin{equation*}
\frac{\partial^{2} F_{r_{k}^{i}}}{\partial r_{m}^{j} \partial r_{n}^{l}} = \frac{\partial^{2} F_{r_{k}^{i}}}{\partial r_{m}^{j} \partial \tilde{r}_{n}^{l}} =\frac{\partial^{2} F_{r_{k}^{i}}}{\partial r_{m}^{j} \partial \tilde{s}_{n}^{l}}=0,  
\end{equation*}
\begin{equation}
\begin{split} 
\frac{\partial^{2} F_{r_{k}^{i}}}{\partial r_{m}^{j}\partial s_{n}^{l}} =& k_{j} (\delta_{i,l} \delta_{n, k-m} - \delta_{i,l} \delta_{n, m-k} + \delta_{i,l} \delta_{n, k+m}) \\
&+ k_{l} (\delta_{n, k-m} - \delta_{n, m-k} + \delta_{n, k+m}) \left(\delta_{i,j} - 2 \frac{k_{i}k_{j}}{\lvert k \rvert^{2}} \right)\\
=& (\delta_{n, k-m} - \delta_{n, m-k} + \delta_{n, k+m}) \alpha_{j,l}(i,k).
\end{split}
\end{equation} 
Similarly, using equations (35)-(49), we may deduce 
\begin{equation}
\frac{\partial^{2} F_{r_{k}^{i}}}{\partial r_{n}^{l} \partial s_{m}^{j}} 
= (\delta_{n, k-m} + \delta_{n, m-k} - \delta_{n, m+k}) \alpha_{j,l}(i,k),
\end{equation}
\begin{equation}
 \frac{\partial^{2} F_{r_{k}^{i}}}{\partial \tilde{r}_{m}^{j}\partial \tilde{s}_{n}^{l}} 
= (-\delta_{n, k-m} + \delta_{n, m-k} - \delta_{n, m+k}) \alpha_{j,l}(i,k),
\end{equation}
\begin{equation}
\frac{\partial^{2} F_{r_{k}^{i}}}{\partial \tilde{r}_{n}^{l} \partial \tilde{s}_{m}^{j}}  = (-\delta_{n, k-m} - \delta_{n, m-k} + \delta_{n, m+k}) \alpha_{j,l}(i,k),
\end{equation}
\begin{equation}
 \frac{\partial^{2} F_{s_{k}^{i}}}{\partial r_{m}^{j} \partial r_{n}^{l}} 
= (-\delta_{n, k-m} - \delta_{n, m-k} - \delta_{n, m+k}) \alpha_{j,l}(i,k),
\end{equation}
\begin{equation}
 \frac{\partial^{2} F_{s_{k}^{i}}}{\partial s_{m}^{j} \partial s_{n}^{l}} 
= (\delta_{n, k-m} - \delta_{n, m-k} - \delta_{n, m+k}) \alpha_{j,l}(i,k),
\end{equation}
\begin{equation}
\frac{\partial^{2} F_{s_{k}^{i}}}{\partial \tilde{r}_{m}^{j} \partial \tilde{r}_{n}^{l}} = (\delta_{n, k-m} + \delta_{n, m-k} + \delta_{n, m+k}) \alpha_{j,l}(i,k),
\end{equation}
\begin{equation}
\frac{\partial^{2} F_{s_{k}^{i}}}{\partial \tilde{s}_{m}^{j} \partial \tilde{s}_{n}^{l}} 
 = (-\delta_{n, k-m} + \delta_{n, m-k} + \delta_{n, m+k}) \alpha_{j,l}(i,k), 
\end{equation}
\begin{equation}
 \frac{\partial^{2} \tilde{F}_{\tilde{r}_{k}^{i}}}{\partial r_{m}^{j} \partial \tilde{s}_{n}^{l}}= (\delta_{n, k-m} - \delta_{n, m-k} + \delta_{n, m+k}) (k_{j}\delta_{i,l} - k_{l} \delta_{i,j}),
\end{equation}
\begin{equation}
\frac{\partial^{2} \tilde{F}_{\tilde{r}_{k}^{i}}}{\partial r_{n}^{l} \partial \tilde{s}_{m}^{j}} = (\delta_{n, k-m} + \delta_{n, m-k}- \delta_{n, k+m} ) (k_{l} \delta_{i,j}  - k_{j}\delta_{i,l}),
\end{equation}
\begin{equation}
 \frac{\partial^{2} \tilde{F}_{\tilde{r}_{k}^{i}}}{\partial s_{m}^{j} \partial \tilde{r}_{n}^{l}} = (\delta_{n, k-m} + \delta_{n, m-k} - \delta_{n, m+k}) (k_{j}\delta_{i,l} - k_{l} \delta_{i,j}),
\end{equation}
\begin{equation}
 \frac{\partial^{2} \tilde{F}_{\tilde{r}_{k}^{i}}}{\partial s_{n}^{l} \partial \tilde{r}_{m}^{j}} 
= (\delta_{n, k-m} - \delta_{n, m-k}+ \delta_{n, k+m} )( k_{l} \delta_{i,j}  -  k_{j} \delta_{i,l}), 
\end{equation}
\begin{equation}
 \frac{\partial^{2} \tilde{F}_{\tilde{s}_{k}^{i}}}{\partial r_{m}^{j} \partial \tilde{r}_{n}^{l}} 
 = (-\delta_{n, k-m} - \delta_{n, m-k} - \delta_{n, m+k}) (k_{j} \delta_{i,l} - k_{l} \delta_{i,j}),
\end{equation}
\begin{equation}
 \frac{\partial^{2} \tilde{F}_{\tilde{s}_{k}^{i}}}{\partial r_{n}^{l} \partial \tilde{r}_{m}^{j}} = (\delta_{n, k-m} + \delta_{n, m-k}+ \delta_{n, k+m} ) (  -  k_{l}  \delta_{i,j}+ k_{j} \delta_{i,l}),
\end{equation}
\begin{equation}
 \frac{\partial^{2} \tilde{F}_{\tilde{s}_{k}^{i}}}{\partial s_{m}^{j} \partial \tilde{s}_{n}^{l}} = (\delta_{n, k-m} - \delta_{n, m-k} - \delta_{n, m+k}) (k_{j} \delta_{i,l} - k_{l} \delta_{i,j}),
\end{equation}
\begin{equation}
 \frac{\partial^{2} \tilde{F}_{\tilde{s}_{k}^{i}}}{\partial s_{n}^{l} \partial \tilde{s}_{m}^{j}} = (\delta_{n, k-m} - \delta_{n, m-k}- \delta_{n, k+m} )(k_{l} \delta_{i,j} - k_{j}\delta_{i,l}).
\end{equation}
It may be computed similarly to show that all other partial derivatives of $F_{r_{k}^{i}}, F_{s_{k}^{i}}, \tilde{F}_{\tilde{r}_{k}^{i}}$ and $\tilde{F}_{\tilde{s}_{k}^{i}}$ all vanish. Making use of these computations of second derivatives, in particular those that vanish, we may compute $[[F_{0}, V], W]$ to reach that it only consists of 
\begin{equation}
\begin{split} 
[[F_{0}, V], W]
=& \sum_{k \in \tilde{\mathcal{K}}}\sum_{i,j,l=1}^{3} [w_{l}^{r} v_{j}^{s} (\frac{\partial^{2} F_{r_{k}^{i}}}{\partial r_{n}^{l} \partial s_{m}^{j}}) + w_{l}^{s} v_{j}^{r} (\frac{\partial^{2} F_{r_{k}^{i}}}{\partial s_{n}^{l} \partial r_{m}^{j}})\\
& \hspace{12mm} + \tilde{w}_{l}^{r} \tilde{v}_{j}^{s} (\frac{\partial^{2} F_{r_{k}^{i}}}{ \partial \tilde{r}_{n}^{l} \partial \tilde{s}_{m}^{j}  }) + \tilde{w}_{l}^{s} \tilde{v}_{j}^{r}(\frac{\partial^{2} F_{r_{k}^{i}}}{\partial \tilde{s}_{n}^{l}\partial \tilde{r}_{m}^{j} })] \frac{\partial}{\partial r_{k}^{i}}  \\
& \hspace{10mm} + [w_{l}^{r} v_{j}^{r} (\frac{\partial^{2} F_{s_{k}^{i}}}{\partial r_{n}^{l} \partial r_{m}^{j}}) + w_{l}^{s} v_{j}^{s} (\frac{\partial^{2} F_{s_{k}^{i}}}{\partial s_{n}^{l} \partial s_{m}^{j}})\\
& \hspace{12mm} + \tilde{w}_{l}^{r} \tilde{v}_{j}^{r} (\frac{\partial^{2} F_{s_{k}^{i}}}{ \partial \tilde{r}_{n}^{l}  \partial \tilde{r}_{m}^{j} }) + \tilde{w}_{l}^{s} \tilde{v}_{j}^{s}(\frac{\partial^{2} F_{s_{k}^{i}}}{         \partial \tilde{s}_{n}^{l}      \partial \tilde{s}_{m}^{j}  })] \frac{\partial}{\partial s_{k}^{i}}\\
& \hspace{10mm} + [w_{l}^{r} \tilde{v}_{j}^{s} (\frac{\partial^{2} \tilde{F}_{\tilde{r}_{k}^{i}}}{\partial r_{n}^{l} \partial \tilde{s}_{m}^{j}}) + w_{l}^{s} \tilde{v}_{j}^{r} (\frac{\partial^{2} \tilde{F}_{\tilde{r}_{k}^{i}}}{\partial s_{n}^{l} \partial \tilde{r}_{m}^{j}})\\
& \hspace{12mm} + \tilde{w}_{l}^{r} v_{j}^{s} (\frac{\partial^{2} \tilde{F}_{\tilde{r}_{k}^{i}}}{ \partial \tilde{r}_{n}^{l} \partial s_{m}^{j}   }) + \tilde{w}_{l}^{s} v_{j}^{r}(\frac{\partial^{2} \tilde{F}_{\tilde{r}_{k}^{i}}}{ \partial \tilde{s}_{n}^{l}   \partial r_{m}^{j}  })] \frac{\partial}{\partial \tilde{r}_{k}^{i}}  \\
& \hspace{10mm} + [w_{l}^{r} \tilde{v}_{j}^{r} (\frac{\partial^{2} \tilde{F}_{\tilde{s}_{k}^{i}}}{\partial r_{n}^{l} \partial \tilde{r}_{m}^{j}}) + w_{l}^{s} \tilde{v}_{j}^{s} (\frac{\partial^{2} \tilde{F}_{\tilde{s}_{k}^{i}}}{\partial s_{n}^{l} \partial \tilde{s}_{m}^{j}})\\
& \hspace{12mm} + \tilde{w}_{l}^{r} v_{j}^{r} (\frac{\partial^{2} \tilde{F}_{\tilde{s}_{k}^{i}}}{\partial \tilde{r}_{n}^{l}  \partial r_{m}^{j}  }) + \tilde{w}_{l}^{s} v_{j}^{s}(\frac{  \partial^{2} \tilde{F}_{\tilde{s}_{k}^{i}}}{    \partial \tilde{s}_{n}^{l} \partial s_{m}^{j}    })] \frac{\partial}{\partial \tilde{s}_{k}^{i}}. 
\end{split}
\end{equation} 
Furthermore, we investigate the coefficients of $\displaystyle{\frac{\partial}{\partial r_{k}^{i}}}$ from (66) as 
\begin{equation}
\begin{split} 
& \sum_{j,l=1}^{3} w_{l}^{r} v_{j}^{s} (\frac{\partial^{2} F_{r_{k}^{i}}}{\partial r_{n}^{l} \partial s_{m}^{j}}) + w_{l}^{s} v_{j}^{r} (\frac{\partial^{2} F_{r_{k}^{i}}}{\partial s_{n}^{l} \partial r_{m}^{j}}) + \tilde{w}_{l}^{r} \tilde{v}_{j}^{s} (\frac{\partial^{2} F_{r_{k}^{i}}}{\partial \tilde{r}_{n}^{l}\partial \tilde{s}_{m}^{j}}) + \tilde{w}_{l}^{s} \tilde{v}_{j}^{r}(\frac{\partial^{2} F_{r_{k}^{i}}}{\partial \tilde{s}_{n}^{l}\partial \tilde{r}_{m}^{j} })\\
=& \sum_{j,l=1}^{3} w_{l}^{r} v_{j}^{s}(\delta_{n, k-m} + \delta_{n, m-k} - \delta_{n, m+k}) \alpha_{j,l}(i,k) \\
&+ w_{l}^{s} v_{j}^{r}(\delta_{n, k-m} - \delta_{n, m-k} + \delta_{n, m+k})\alpha_{j,l}(i,k) \\
&+ \tilde{w}_{l}^{r} \tilde{v}_{j}^{s}(-\delta_{n, k-m} - \delta_{n, m-k} + \delta_{n, m+k})\alpha_{j,l}(i,k) \\
&+ \tilde{w}_{l}^{s} \tilde{v}_{j}^{r}(-\delta_{n, k-m} + \delta_{n, m-k} - \delta_{n, m+k}) \alpha_{j,l}(i,k) 
\end{split}
\end{equation} 
by (51), (50), (53), (52) where we compute 
\begin{equation}
\begin{split} 
\sum_{j,l=1}^{3} w_{l}^{r} v_{j}^{s}  \alpha_{j,l}(i,k)
=& \sum_{j,l=1}^{3} w_{l}^{r} v_{j}^{s} \left(k_{j} \delta_{i,l}  - \frac{k_{i}k_{j}k_{l}}{\lvert k\rvert^{2}}\right) + w_{l}^{r}v_{j}^{s} \left( k_{l}\delta_{i,j} - \frac{k_{i}k_{j}k_{l}}{\lvert k\rvert^{2}} \right) \\
=& (v^{s} \cdot k) P_{k}(w^{r})_{i} + (w^{r} \cdot k) P_{k}(v^{s})_{i}. 
\end{split}
\end{equation} 
Similarly, we may compute 
\begin{equation}
\sum_{j,l=1}^{3} w_{l}^{s} v_{j}^{r} \alpha_{j,l}(i,k)
= (v^{r} \cdot k) P_{k}(w^{s})_{i} + (w^{s}\cdot k) P_{k}(v^{r})_{i},  
\end{equation}
\begin{equation}
\sum_{j,l=1}^{3} \tilde{w}_{l}^{r} \tilde{v}_{j}^{s} \alpha_{j,l}(i,k)
= (\tilde{v}^{s} \cdot k) P_{k}(\tilde{w}^{r})_{i} + (\tilde{w}^{r} \cdot k) P_{k}(\tilde{v}^{s})_{i}, 
\end{equation}
\begin{equation}
\sum_{j,l=1}^{3} \tilde{w}_{l}^{s} \tilde{v}_{j}^{r} \alpha_{j,l}(i,k) 
= (\tilde{v}^{r} \cdot k)P_{k}(\tilde{w}^{s})_{i} + (\tilde{w}^{s} \cdot k) P_{k} (\tilde{v}^{r})_{i}, 
\end{equation}
so that applying (68), (69), (70), (71) in (67), we see that the coefficient of $\displaystyle{\frac{\partial}{\partial r_{k}^{i}}}$ is 
\begin{equation}
\begin{split} 
&(\delta_{n, k-m} + \delta_{n, m-k} - \delta_{n, m+k})[(v^{s} \cdot k) P_{k}(w^{r})_{i} + (w^{r}\cdot k)P_{k} (v^{s})_{i}]\\
&+ (\delta_{n, k-m} - \delta_{n, m-k} + \delta_{n, m+k}) [(v^{r} \cdot k) P_{k}(w^{s})_{i} + (w^{s} \cdot k) P_{k}(v^{r})_{i}] \\
&+ (-\delta_{n, k-m} - \delta_{n, m-k} + \delta_{n, m+k}) [(\tilde{v}^{s} \cdot k) P_{k} (\tilde{w}^{r})_{i} + (\tilde{w}^{r} \cdot k) P_{k}(\tilde{v}^{s})_{i}]\\
&+ (-\delta_{n, k-m} + \delta_{n, m-k} - \delta_{n, m+k})[(\tilde{v}^{r} \cdot k) P_{k} (\tilde{w}^{s})_{i} + (\tilde{w}^{s} \cdot k) P_{k}(\tilde{v}^{r})_{i}].
\end{split}
\end{equation} 
Next, we investigate the coefficient of $\displaystyle{\frac{\partial}{\partial s_{k}^{i}}}$ in (66) as 
\begin{equation}
\begin{split} 
& \sum_{j,l=1}^{3} w_{l}^{r} v_{j}^{r} (\frac{\partial^{2} F_{s_{k}^{i}}}{\partial r_{n}^{l} \partial r_{m}^{j}}) + w_{l}^{s} v_{j}^{s} (\frac{\partial^{2} F_{s_{k}^{i}}}{\partial s_{n}^{l} \partial s_{m}^{j}}) + \tilde{w}_{l}^{r} \tilde{v}_{j}^{r} (\frac{\partial^{2} F_{s_{k}^{i}}}{\partial \tilde{r}_{n}^{l}\partial \tilde{r}_{m}^{j}}) + \tilde{w}_{l}^{s} \tilde{v}_{j}^{s}(\frac{\partial^{2} F_{s_{k}^{i}}}{\partial \tilde{s}_{n}^{l}\partial \tilde{s}_{m}^{j} })\\
=& \sum_{j,l=1}^{3} w_{l}^{r} v_{j}^{r} (-\delta_{n, k-m} - \delta_{n, m-k} - \delta_{n, m+k})\alpha_{j,l}(i,k) \\
& \hspace{4mm} + w_{l}^{s} v_{j}^{s} (\delta_{n, k-m}  - \delta_{n, m-k} - \delta_{n, m+k}) \alpha_{j,l}(i,k) \\
&\hspace{4mm} +  \tilde{w}_{l}^{r} \tilde{v}_{j}^{r} (\delta_{n, k-m} + \delta_{n, m-k} + \delta_{n, m+k})\alpha_{j,l}(i,k)  \\
&\hspace{4mm} + \tilde{w}_{l}^{s} \tilde{v}_{j}^{s} (-\delta_{n, k-m} + \delta_{n, m-k} + \delta_{n, m+k})\alpha_{j,l}(i,k),  
\end{split}
\end{equation} 
by (54), (55), (56), (57) where 
\begin{equation}
 \sum_{j,l=1}^{3} w_{l}^{r} v_{j}^{r} \alpha_{j,l}(i,k)
= (v^{r} \cdot k) P_{k}(w^{r})_{i} + (w^{r} \cdot k) P_{k}(v^{r})_{i}, 
\end{equation}
\begin{equation}
 \sum_{j,l=1}^{3} w_{l}^{s} v_{j}^{s} \alpha_{j,l}(i,k) 
= (v^{s} \cdot k) P_{k}(w^{s})_{i} + (w^{s} \cdot k) P_{k}(v^{s})_{i},
\end{equation}
\begin{equation}
 \sum_{j,l=1}^{3} \tilde{w}_{l}^{r} \tilde{v}_{j}^{r} \alpha_{j,l}(i,k) 
= (\tilde{v}^{r} \cdot k) P_{k} (\tilde{w}^{r})_{i} + (\tilde{w}^{r} \cdot k) P_{k}(\tilde{v}^{r})_{i},
\end{equation}
\begin{equation}
\sum_{j,l=1}^{3} \tilde{w}_{l}^{s} \tilde{v}_{j}^{s} \alpha_{j,l}(i,k)
= (\tilde{v}^{s} \cdot k) P_{k}(\tilde{w}^{s})_{i} + (\tilde{w}^{s} \cdot k) P_{k} (\tilde{v}^{s})_{i},
\end{equation}
so that applying (74), (75), (76), (77) in (73), we see that the coefficient of $\displaystyle{\frac{\partial}{\partial s_{k}^{i}}}$ is 
\begin{equation}
\begin{split} 
& \hspace{3mm} (-\delta_{n, k-m} - \delta_{n, m-k} - \delta_{n, m+k}) [(v^{r} \cdot k) P_{k}(w^{r})_{i} + (w^{r} \cdot k) P_{k}(v^{r})_{i}] \\
&+ (\delta_{n, k-m} - \delta_{n, m-k} - \delta_{n, m+k}) [(v^{s} \cdot k) P_{k}(w^{s})_{i} + (w^{s} \cdot k)P_{k}(v^{s})_{i}]\\
&+ (\delta_{n, k-m} + \delta_{n, m-k} + \delta_{n, m+k}) [(\tilde{v}^{r} \cdot k) P_{k}(\tilde{w}^{r})_{i} + (\tilde{w}^{r} \cdot k) P_{k}(\tilde{v}^{r})_{i}] \\
&+ (-\delta_{n, k-m} + \delta_{n, m-k} + \delta_{n, m+k})[(\tilde{v}^{s} \cdot k) P_{k}(\tilde{w}^{s})_{i} + (\tilde{w}^{s} \cdot k) P_{k}(\tilde{v}^{s})_{i}]. 
\end{split}
\end{equation} 
Next, we investigate the coefficient of $\displaystyle{\frac{\partial}{\partial \tilde{r}_{k}^{i}}}$ in (66) as 
\begin{equation}
\begin{split} 
& \sum_{j,l=1}^{3} w_{l}^{r} \tilde{v}_{j}^{s} (\frac{\partial^{2} \tilde{F}_{\tilde{r}_{k}^{i}}}{\partial r_{n}^{l} \partial \tilde{s}_{m}^{j}}) + w_{l}^{s} \tilde{v}_{j}^{r} (\frac{\partial^{2} \tilde{F}_{\tilde{r}_{k}^{i}}}{\partial s_{n}^{l} \partial \tilde{r}_{m}^{j}}) + \tilde{w}_{l}^{r} v_{j}^{s} (\frac{\partial^{2} \tilde{F}_{\tilde{r}_{k}^{i}}}{\partial \tilde{r}_{n}^{l}\partial s_{m}^{j}}) + \tilde{w}_{l}^{s} v_{j}^{r}(\frac{\partial^{2} \tilde{F}_{\tilde{r}_{k}^{i}}}{\partial \tilde{s}_{n}^{l}\partial r_{m}^{j} })\\
& \sum_{j,l=1}^{3} w_{l}^{r} \tilde{v}_{j}^{s} (\delta_{n, k-m}  + \delta_{n, m-k}- \delta_{n, k+m} )(k_{l}\delta_{i,j}  - k_{j}\delta_{i,l} )\\
&+ w_{l}^{s}\tilde{v}_{j}^{r} (\delta_{n, k-m} - \delta_{n, m-k} + \delta_{n, k+m})(k_{l}\delta_{i,j}  - k_{j}\delta_{i,l} )\\
&+ \tilde{w}_{l}^{r}v_{j}^{s} (\delta_{n, k-m} + \delta_{n, m-k} - \delta_{n, m+k})(-k_{l}\delta_{i,j}  + k_{j}\delta_{i,l} )\\
&+ \tilde{w}_{l}^{s}v_{j}^{r}(\delta_{n, k-m} - \delta_{n, m-k} + \delta_{n, m+k})(-k_{l}\delta_{i,j}  + k_{j}\delta_{i,l} )\\
=& (\delta_{n, k-m} + \delta_{n, m-k}- \delta_{n, k+m} ) (\tilde{v}_{i}^{s} (w^{r} \cdot k) - w_{i}^{r} (\tilde{v}^{s} \cdot k))\\
&+ (\delta_{n, k-m} - \delta_{n, m-k}+ \delta_{n, k+m} ) (\tilde{v}_{i}^{r} (w^{s} \cdot k) - w_{i}^{s} (\tilde{v}^{r} \cdot k))\\
&+ (\delta_{n, k-m} + \delta_{n,m-k} - \delta_{n, m+k}) (-v_{i}^{s}(\tilde{w}^{r} \cdot k) + \tilde{w}_{i}^{r} (v^{s} \cdot k))\\
&+ (\delta_{n, k-m} - \delta_{n, m-k} + \delta_{n, m+k})(-v_{i}^{r} (\tilde{w}^{s}\cdot k) + \tilde{w}_{i}^{s} (v^{r} \cdot k))
\end{split}
\end{equation} 
by (59), (61), (60), (58). Similarly, we may rewrite the coefficient of $\displaystyle{\frac{\partial}{\partial \tilde{s}_{k}^{i}}}$ in (66): 
\begin{equation}
\begin{split} 
& \sum_{j,l=1}^{3} w_{l}^{r} \tilde{v}_{j}^{r} (\frac{\partial^{2} \tilde{F}_{\tilde{s}_{k}^{i}}}{\partial r_{n}^{l} \partial \tilde{r}_{m}^{j}}) + w_{l}^{s} \tilde{v}_{j}^{s} (\frac{\partial^{2} \tilde{F}_{\tilde{s}_{k}^{i}}}{\partial s_{n}^{l} \partial \tilde{s}_{m}^{j}}) + \tilde{w}_{l}^{r} v_{j}^{r} (\frac{\partial^{2} \tilde{F}_{\tilde{s}_{k}^{i}}}{\partial \tilde{r}_{n}^{l}\partial r_{m}^{j}}) + \tilde{w}_{l}^{s} v_{j}^{s}(\frac{\partial^{2} \tilde{F}_{\tilde{s}_{k}^{i}}}{\partial \tilde{s}_{n}^{l}\partial s_{m}^{j} })\\
=& \sum_{j, l=1}^{3} w_{l}^{r} \tilde{v}_{j}^{r} (\delta_{n, k-m}+ \delta_{n, m-k} + \delta_{n, k+m} ) (-k_{l}\delta_{i,j}  + k_{j}\delta_{i,l} ) \\
&+ w_{l}^{s} \tilde{v}_{j}^{s}(\delta_{n, k-m}- \delta_{n, m-k} - \delta_{n, k+m} ) (k_{l}\delta_{i,j}  - k_{j}\delta_{i,l} ) \\
&+ \tilde{w}_{l}^{r} v_{j}^{r} (-\delta_{n, k-m} - \delta_{n, m-k} - \delta_{n, m+k}) (-k_{l}\delta_{i,j} + k_{j}\delta_{i,l} )\\
&+ \tilde{w}_{l}^{s} v_{j}^{s} (\delta_{n, k-m} - \delta_{n, m-k} - \delta_{n, m+k}) (-k_{l}\delta_{i,j}  + k_{j}\delta_{i,l} )\\
=& (\delta_{n, k-m}+ \delta_{n, m-k} + \delta_{n, k+m} ) (-\tilde{v}_{i}^{r}(w^{r} \cdot k) + w_{i}^{r} (\tilde{v}^{r} \cdot k)) \\
&+ (\delta_{n, k-m}- \delta_{n, m-k} - \delta_{n, k+m}) (\tilde{v}_{i}^{s}(w^{s} \cdot k) - w_{i}^{s} (\tilde{v}^{s} \cdot k))\\
&+ (-\delta_{n, k-m} - \delta_{n, m-k} - \delta_{n, m+k})(-v_{i}^{r}(\tilde{w}^{r} \cdot k) + \tilde{w}_{i}^{r}(v^{r} \cdot k))\\
&+ (\delta_{n, k-m} - \delta_{n, m-k} - \delta_{n, m+k}) (-v_{i}^{s}(\tilde{w}^{s} \cdot k) + \tilde{w}_{i}^{s}(v^{s} \cdot k)) 
\end{split}
\end{equation} 
by (63), (65), (62), (64). Therefore, we conclude applying (72), (78), (79), (80) in (66), 
\begin{equation*}
\begin{split} 
& [[F_{0}, V], W]\\
=& \sum_{k \in \tilde{\mathcal{K}}} [(\delta_{n, k-m} + \delta_{n, m-k} - \delta_{n, m+k})[(v^{s} \cdot k) P_{k}(w^{r}) + (w^{r}\cdot k)P_{k} (v^{s})]\nonumber\\
&+ (\delta_{n, k-m} - \delta_{n, m-k} + \delta_{n, m+k}) [(v^{r} \cdot k) P_{k}(w^{s}) + (w^{s} \cdot k) P_{k}(v^{r})] \nonumber\\
&+ (-\delta_{n, k-m} - \delta_{n, m-k} + \delta_{n, m+k}) [(\tilde{v}^{s} \cdot k) P_{k} (\tilde{w}^{r}) + (\tilde{w}^{r} \cdot k) P_{k}(\tilde{v}^{s})]\nonumber\\
&+ (-\delta_{n, k-m} + \delta_{n, m-k} - \delta_{n, m+k})[(\tilde{v}^{r} \cdot k) P_{k} (\tilde{w}^{s}) + (\tilde{w}^{s} \cdot k) P_{k}(\tilde{v}^{r})]] \cdot \frac{\partial}{\partial r_{k}}\nonumber\\
&+ [(-\delta_{n, k-m} - \delta_{n, m-k} - \delta_{n, m+k}) [(v^{r} \cdot k) P_{k}(w^{r}) + (w^{r} \cdot k) P_{k}(v^{r})] \nonumber\\
&+ (\delta_{n, k-m} - \delta_{n, m-k} - \delta_{n, m+k}) [(v^{s} \cdot k) P_{k}(w^{s}) + (w^{s} \cdot k)P_{k}(v^{s})]\nonumber\\
&+ (\delta_{n, k-m} + \delta_{n, m-k} + \delta_{n, m+k}) [(\tilde{v}^{r} \cdot k) P_{k}(\tilde{w}^{r}) + (\tilde{w}^{r} \cdot k) P_{k}(\tilde{v}^{r})]\nonumber\\
&+ (-\delta_{n, k-m} + \delta_{n, m-k} + \delta_{n, m+k})[(\tilde{v}^{s} \cdot k) P_{k}(\tilde{w}^{s}) + (\tilde{w}^{s} \cdot k) P_{k}(\tilde{v}^{s})]] \cdot \frac{\partial}{\partial s_{k}}\nonumber\\
&+ [(\delta_{n, k-m}+ \delta_{n, m-k} - \delta_{n, k+m} ) (\tilde{v}^{s} (w^{r} \cdot k) - w^{r} (\tilde{v}^{s} \cdot k))\nonumber\\
&+ (\delta_{n, k-m}- \delta_{n, m-k} + \delta_{n, k+m} ) (\tilde{v}^{r} (w^{s} \cdot k) - w^{s} (\tilde{v}^{r} \cdot k))\nonumber\\
&+ (\delta_{n, k-m} + \delta_{n,m-k} - \delta_{n, m+k}) (-v^{s}(\tilde{w}^{r} \cdot k) + \tilde{w}^{r} (v^{s} \cdot k))\nonumber\\
&+ (\delta_{n, k-m} - \delta_{n, m-k} + \delta_{n, m+k})(-v^{r} (\tilde{w}^{s}\cdot k) + \tilde{w}^{s} (v^{r} \cdot k))] \cdot \frac{\partial}{\partial \tilde{r}_{k}}\nonumber\\
&+ [(\delta_{n, k-m} + \delta_{n, m-k}+ \delta_{n, k+m} ) (-\tilde{v}^{r}(w^{r} \cdot k) + w^{r} (\tilde{v}^{r} \cdot k)) \nonumber\\
&+ (\delta_{n, k-m}- \delta_{n, m-k} - \delta_{n, k+m}  ) (\tilde{v}^{s}(w^{s} \cdot k) - w^{s} (\tilde{v}^{s} \cdot k))\nonumber\\
&+ (-\delta_{n, k-m} - \delta_{n, m-k} - \delta_{n, m+k})(-v^{r}(\tilde{w}^{r} \cdot k) + \tilde{w}^{r}(v^{r} \cdot k))\nonumber\\
&+ (\delta_{n, k-m} - \delta_{n, m-k} - \delta_{n, m+k}) (-v^{s}(\tilde{w}^{s} \cdot k) + \tilde{w}^{s}(v^{s} \cdot k))  ] \cdot \frac{\partial}{\partial \tilde{s}_{k}}.\nonumber
\end{split}
\end{equation*} 
Now recalling that $k = m + n, h = n-m, g = m-n$ by hypothesis, we deduce (33) to conclude the  proof of Proposition 4.1. 
\end{proof}

\begin{proposition}
Suppose $\mathcal{N}$ is a subset of indices and   
\small 
\begin{equation*}
\begin{split} 
A(\mathcal{N}) \triangleq \{k \in \mathcal{K}_{N}:& \text{ constant vector fields corresponding to } k \text{ if } k \in \tilde{\mathcal{K}}, -k \text{ if } k \in - \tilde{\mathcal{K}},\\
&  \text{ are in the Lie algebra generated by the vector fields } \{\mathcal{F}_{0}\} \cup \mathcal{U}_{j}, j \in \mathcal{N}\}. 
\end{split} 
\end{equation*}
\normalsize 

\begin{enumerate}
\item If $m \in A(\mathcal{N})$, then $-m \in A (\mathcal{N})$. 
\item If $m, n \in A(\mathcal{N}), m + n \in \mathcal{K}_{N}, m$ and $n$ are linearly independent and $\lvert m \rvert \neq \lvert n \rvert$, then $m + n \in A(\mathcal{N})$. 
\item If $\lvert m \rvert = \lvert n \rvert, v \in \mathcal{U}_{m}, w \in \mathcal{U}_{n}$, then $[[F_{0}, V], W]$ spans the four-dimensional subspace of $\mathcal{U}_{m+n}$. 
\end{enumerate}

\end{proposition}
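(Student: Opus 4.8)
\emph{Proof plan.} Throughout write $\mathfrak{L}$ for the Lie algebra generated by $\{F_{0}\}\cup\{\mathcal{U}_{j}:j\in\mathcal{N}\}$; by definition $k\in A(\mathcal{N})$ means $\mathcal{U}_{\ell}\subseteq\mathfrak{L}$ for the representative $\ell\in\{k,-k\}\cap\tilde{\mathcal{K}}$, which I abbreviate as $\mathcal{U}_{k}\subseteq\mathfrak{L}$. Part (1) is then immediate: the constant vector fields ``corresponding to'' $m$ and those ``corresponding to'' $-m$ are by definition the same object, so $m\in A(\mathcal{N})\iff -m\in A(\mathcal{N})$, with nothing to compute. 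Parts (2) and (3) are the substance, and both are run off the bracket identity (33) of Proposition 4.1.

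For part (2): since $m,n\in A(\mathcal{N})$, part (1) gives $\mathcal{U}_{\pm m},\mathcal{U}_{\pm n}\subseteq\mathfrak{L}$, so I would apply (33) to each of the four pairs $(\varepsilon m,\varepsilon' n)$, $\varepsilon,\varepsilon'\in\{\pm1\}$. Each application yields an element of $\mathfrak{L}$ whose only nonzero blocks sit at the Fourier modes $\varepsilon m+\varepsilon' n$, $\varepsilon' n-\varepsilon m$ and $\varepsilon m-\varepsilon' n$; identifying $p$ with $-p$ and discarding indices outside $\tilde{\mathcal{K}}$ (as permitted by Proposition 4.1) collapses the support to the two distinct modes $m+n$ and $n-m$ --- distinct because $m,n\ne0$ by linear independence, and if $n-m\notin\mathcal{K}_{N}$ the difference-mode block is simply absent and no separation is needed. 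The point is that the four sign choices present the mode $m+n$ through different slots of (33) (the $k$-slot, the $g$-slot, and so on), and those slots carry opposite signs on the mixed $r$/$s$ terms and on the magnetic terms; after collapsing all dot products with $v\cdot m=w\cdot n=0$ --- which also gives, e.g., $v^{s}\cdot(m+n)=v^{s}\cdot(n-m)=v^{s}\cdot n$ and $w^{r}\cdot(m+n)=-w^{r}\cdot(n-m)=w^{r}\cdot m$ --- one is left with a small linear system tying the coefficient at mode $m+n$ to the coefficient at mode $n-m$, whose matrix is invertible precisely because $|m|\ne|n|$. Solving it exhibits elements of $\mathfrak{L}$ supported at mode $m+n$ only; letting $V\in\mathcal{U}_{m}$ and $W\in\mathcal{U}_{n}$ range --- taking $v^{s},\tilde{v}^{s}$ along $m\times n$ and $v^{r},\tilde{v}^{r}$ across $m^{\perp}$, similarly for $W$, and using $\dim\mathrm{span}(m,n)=2$ --- I would then check these fill all of $\mathcal{U}_{m+n}=\mathcal{R}_{m+n}\oplus\tilde{\mathcal{R}}_{m+n}\oplus\mathcal{S}_{m+n}\oplus\tilde{\mathcal{S}}_{m+n}$, giving $m+n\in A(\mathcal{N})$.

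For part (3): $|m|=|n|$ is equivalent to $(m+n)\cdot(m-n)=0$, which is exactly the case in which the system of part (2) degenerates, so that separation fails; instead I would compute the mode-$(m+n)$ block of $[[F_{0},V],W]$ directly. Feeding the orthogonality $m+n\perp m-n$ together with $v\cdot m=w\cdot n=0$ into the coefficients of (33) trivializes several of the projections $P_{m+n}$ and annihilates the cross terms, and reading off what remains should show that, as $V\in\mathcal{U}_{m}$ and $W\in\mathcal{U}_{n}$ vary, the mode-$(m+n)$ blocks span exactly a four-dimensional subspace of the eight-dimensional $\mathcal{U}_{m+n}$: the inclusion ``$\subseteq$'' is that every surviving term visibly lies in that subspace, and ``$\supseteq$'' comes from the same choices of $v,w$ along $m^{\perp},n^{\perp}$, the precise subspace being identified from the formula.

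The main obstacle is not any of the ideas above but the bookkeeping, and it is exactly where the MHD system departs from the Navier--Stokes analyses of \cite{EM01,R04}: one must carry all eight coordinate families ($r,s,\tilde{r},\tilde{s}$ at the sum mode and at the difference mode) and the four nonlinear couplings through (33) while forming the sign-sensitive linear combinations that isolate mode $m+n$ in (2) and while verifying the two span statements in (2) and (3). I expect that sign tracking, together with the surjectivity check onto $\mathcal{U}_{m+n}$ (resp.\ onto the four-dimensional subspace), to absorb essentially all of the effort.
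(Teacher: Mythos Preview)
Your outline is in the right spirit, but it diverges from the paper's argument in a way that would make your bookkeeping much heavier than necessary and also misplaces the role of the hypothesis $|m|\neq|n|$.

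For part (2), the paper does \emph{not} play with the four sign choices $(\pm m,\pm n)$; indeed Proposition~4.1 is stated only for $m,n\in\tilde{\mathcal{K}}$, so three of your four pairs are not even directly admissible without further translation. Instead the paper exploits the internal $r/s$ structure of $\mathcal{U}_{m}$ and $\mathcal{U}_{n}$: with the same underlying vectors $v,\tilde v\in m^{\perp}$ and $w,\tilde w\in n^{\perp}$ it sets
\[
V^{r}=\sum_{j} v_{j}\tfrac{\partial}{\partial r_{m}^{j}}+\tilde v_{j}\tfrac{\partial}{\partial \tilde r_{m}^{j}},\quad
V^{s}=\sum_{j} v_{j}\tfrac{\partial}{\partial s_{m}^{j}}+\tilde v_{j}\tfrac{\partial}{\partial \tilde s_{m}^{j}},
\]
and likewise $W^{r},W^{s}$, and then reads off from (33) that the two specific combinations
\[
[[F_{0},V^{r}],W^{s}]+[[F_{0},V^{s}],W^{r}],\qquad
[[F_{0},V^{r}],W^{r}]-[[F_{0},V^{s}],W^{s}]
\]
have \emph{identically vanishing} $h$- and $g$-blocks: the difference-mode contributions cancel by the sign pattern of (33), with no linear system to invert and no hypothesis on $|m|,|n|$ required. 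What survives is supported purely at $k=m+n$ and has the form $(v\cdot k)P_{k}(w)+(w\cdot k)P_{k}(v)-(\tilde v\cdot k)P_{k}(\tilde w)-(\tilde w\cdot k)P_{k}(\tilde v)$ in the velocity slots (and the analogous magnetic expression). The condition $|m|\neq|n|$ enters only at the \emph{surjectivity} step: one chooses a basis $\{k,H,L\}$ with $m,n\in\operatorname{span}\{k,H\}$ and checks that $(v\cdot k)P_{k}(w)+(w\cdot k)P_{k}(v)$ sweeps out all of $\operatorname{span}\{H,L\}=k^{\perp}$, which fails exactly when $|m|=|n|$ (try $m=(1,0,0)$, $n=(0,1,0)$ to see the degeneracy). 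So your sentence ``whose matrix is invertible precisely because $|m|\neq|n|$'' attaches the hypothesis to the wrong step.

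For part (3) the same $V^{r}/V^{s}$ combinations still isolate the $k$-mode cleanly; the point is that the surjectivity argument above now yields only half of $k^{\perp}$ in each velocity/magnetic slot, giving a four-dimensional rather than eight-dimensional span inside $\mathcal{U}_{m+n}$. Your plan to ``compute the mode-$(m+n)$ block directly'' is fine, but you would save yourself the separation headache by adopting the paper's combinations from the start.
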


\begin{proof}

Having derived (33) through detailed computations, the proof of this Proposition 4.2 goes through very similarly to the previous work on the NSE in \cite{R04} and Boussinesq equations in \cite{LW04}; we sketch the proof for completeness. 

For the first part, by (3) $u_{-k} = \overline{u}_{k}, b_{-k} = \overline{b}_{k}$. Thus, if $m \in A(\mathcal{N})$, then it follows that $-m \in A(\mathcal{N})$. 

For the second part, we take $m, n \in A(\mathcal{N})$ such that $m + n \in \mathcal{K}_{N}, m, n$ are linearly independent and $\lvert m \rvert \neq \lvert n \rvert$. By (20) $\mathcal{K}_{N} = \tilde{\mathcal{K}} \cup (-\tilde{\mathcal{K}})$ where $\tilde{\mathcal{K}} \cap (-\tilde{\mathcal{K}}) = \emptyset$. Thus, it suffices to show that if $m, n \in A(\mathcal{N}) \cap \tilde{\mathcal{K}}$ such that $k = m + n \in \tilde{\mathcal{K}}$, then $m + n \in A(\mathcal{N})$ because we can repeat the proof in case $k \in - \tilde{\mathcal{K}}$ identically. Now we let 
\begin{equation*}
\begin{split} 
& V^{r} \triangleq \sum_{j=1}^{3} v_{j} \frac{\partial}{\partial r_{m}^{j}} + \tilde{v}_{j} \frac{\partial}{\partial \tilde{r}_{m}^{j}}, \hspace{5mm} V^{s} \triangleq \sum_{j=1}^{3} v_{j} \frac{\partial}{\partial s_{m}^{j}} + \tilde{v}_{j} \frac{\partial}{\partial \tilde{s}_{m}^{j}}\\
& W^{r} \triangleq \sum_{l=1}^{3} w_{l} \frac{\partial}{\partial r_{n}^{l}} + \tilde{w}_{l} \frac{\partial}{\partial \tilde{r}_{n}^{l}}, \hspace{5mm}
 W^{s} \triangleq \sum_{l=1}^{3} w_{l} \frac{\partial}{\partial s_{n}^{l}} + \tilde{w}_{l} \frac{\partial}{\partial \tilde{s}_{n}^{l}},  
\end{split} 
\end{equation*}
where $v \cdot m = w \cdot n = \tilde{v} \cdot m = \tilde{w} \cdot n = 0$ and compute
\begin{equation*}
\begin{split} 
& [[F_{0}, V^{r}], W^{s}] + [[F_{0}, V^{s}], W^{r}]\\
=& 2[(v\cdot k) P_{k}(w) + (w\cdot k) P_{k}(v) - (\tilde{v}\cdot k) P_{k}(\tilde{w}) - (\tilde{w}\cdot k) P_{k}(\tilde{v})] \cdot \frac{\partial}{\partial r_{k}}\nonumber\\
&+ 2[-(\tilde{v} \cdot k) w + (w\cdot k) \tilde{v} + (v\cdot k) \tilde{w} - (\tilde{w} \cdot k) v ] \cdot \frac{\partial}{\partial \tilde{r}_{k}}, 
\end{split} 
\end{equation*}
\begin{equation*}
\begin{split} 
& [[F_{0}, V^{r}], W^{r}] - [[F_{0}, V^{s}], W^{s}]\\
=& -2[(v\cdot k) P_{k}(w) + (w\cdot k) P_{k}(v) - (\tilde{v}\cdot k) P_{k}(\tilde{w}) - (\tilde{w} \cdot k) P_{k}(\tilde{v})]\cdot \frac{\partial}{\partial s_{k}}\nonumber\\
&-2 [-(\tilde{v}\cdot k) w + (w\cdot k) \tilde{v} +(v\cdot k) \tilde{w} - (\tilde{w}\cdot k) v ] \cdot \frac{\partial}{\partial \tilde{s}_{k}} 
\end{split} 
\end{equation*}
due to (33). We take $H, L \in \mathbb{R}^{3}$ so that $\{k, H, L\}$ is a basis of $\mathbb{R}^{3}$, $H, L$ span $\{x \in \mathbb{R}^{3}: x\cdot k = 0 \}$ and $m, n \in \text{span} [k, H]$. By assumption, $m$ and $n$ are linearly independent and $\lvert m \rvert \neq \lvert n \rvert$ and hence if 
\begin{equation*}
v = \alpha_{1} k + \beta_{1}H + \gamma_{1}L, \hspace{5mm} w = \alpha_{2} k + \beta_{2}H + \gamma_{2} L, 
\end{equation*}
then, it is possible to choose the coefficients $\alpha_{1}, \alpha_{2}, \beta_{1}, \beta_{2}, \gamma_{1}, \gamma_{2}$ so that $(v\cdot k) P_{k} (w) + (w\cdot k) P_{k}(v)$ can be any vector in span $[H, L ]$. Since we can freely choose any $\tilde{v}, \tilde{w}$, we may have 
\begin{equation*}
-(\tilde{v}\cdot k) P_{k} (\tilde{w}) - (\tilde{w}\cdot k) P_{k}(\tilde{v}), \hspace{3mm} -(\tilde{v}\cdot k) w + (w\cdot k) \tilde{v} + (v\cdot k) \tilde{w} - (\tilde{w}\cdot k) v 
\end{equation*}
to be any vector. Therefore, $\mathcal{U}_{k}$ is contained in the Lie algebra generated by the vector field $\{F_{0}\} \cup \mathcal{U}_{p}, p \in \mathcal{N}$, which implies by definition of $A(\mathcal{N})$ that $k \in A(\mathcal{N})$ and hence $m + n \in A(\mathcal{N})$. 

The third part can be proven similarly to the proof of the second part (see \cite[Lemma 4.2 (iii)]{LW04}, proof of \cite[Proposition 5.2]{R04}). This completes the proof of Proposition 4.2. 
\end{proof}

\begin{proposition}
If $\mathcal{N}$ contains $(1,0,0), (0,1,0)$ and $(0,0,1)$, then $A(\mathcal{N}) = \mathcal{K}_{N}$ so that the transition probability densities of the solution process to (\ref{15}), (\ref{16}) are regular. 
\end{proposition}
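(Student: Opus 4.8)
The plan is to prove that the set $A(\mathcal{N})$ of Proposition 4.2 coincides with all of $\mathcal{K}_{N}$. Once this is done, every constant vector field $\mathcal{U}_{k}$ with $k \in \tilde{\mathcal{K}}$ lies in the Lie algebra generated by $\{F_{0}\} \cup \bigcup_{j \in \mathcal{N}} \mathcal{U}_{j}$; these fields then span all of $\mathcal{U}$ at every point of the state space $U$, the H$\ddot{\mathrm{o}}$rmander condition for hypoellipticity holds, and the theorem of \cite{H67} yields a smooth, hence regular, transition density, exactly as recalled in Section 3. So the entire content is the combinatorial assertion $A(\mathcal{N}) = \mathcal{K}_{N}$, which I would establish by the bootstrapping argument used for the three-dimensional Navier--Stokes equations in \cite{R04} and for the Boussinesq system in \cite{LW04}, now fed by the bracket identity (33) of Proposition 4.1 together with the three closure properties of $A(\mathcal{N})$ in Proposition 4.2: symmetry under $k \mapsto -k$; closure under $(m,n) \mapsto m+n$ for linearly independent $m, n$ of distinct lengths with $m + n \in \mathcal{K}_{N}$ (Proposition 4.2(2)); and the same closure for linearly independent $m, n$ of equal length (Proposition 4.2(3)).

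First I would produce a ``seed block''. Since $\mathcal{N} \subseteq A(\mathcal{N})$ by definition, the vectors $e_{1} = (1,0,0), e_{2} = (0,1,0), e_{3} = (0,0,1)$ lie in $A(\mathcal{N})$, and so do $-e_{1}, -e_{2}, -e_{3}$ by Proposition 4.2(1). For $i \neq j$ the vectors $\pm e_{i}$ and $\pm e_{j}$ are linearly independent of equal length, so Proposition 4.2(3) puts every face-diagonal vector $\pm e_{i} \pm e_{j}$ in $A(\mathcal{N})$; each body-diagonal vector $\pm e_{1} \pm e_{2} \pm e_{3}$ is then a sum of a face diagonal and some $\pm e_{l}$, linearly independent of lengths $\sqrt{2} \neq 1$, so Proposition 4.2(2) places it in $A(\mathcal{N})$ as well. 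Finitely many further applications --- for instance $(2,1,0) = (1,1,0) + (1,0,0)$, then $(2,0,0) = (2,1,0) + (0,-1,0)$, then $(2,1,1) = (2,1,0) + (0,0,1)$, $(2,2,0) = (2,1,0) + (0,1,0)$, $(2,2,1) = (2,2,0) + (0,0,1)$, $(2,2,2) = (2,2,1) + (0,0,1)$, and their permutations and sign changes --- show that $A(\mathcal{N})$ contains the whole block $\{k \in \mathbb{Z}^{3}: 0 < \lvert k \rvert_{\infty} \leq 2\}$, each step being an instance of Proposition 4.2(2) (distinct lengths) or Proposition 4.2(3) (equal lengths) with the two summands linearly independent.

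Next comes an induction on the lexicographically ordered pair $(\lvert k \rvert_{\infty}, \#\{l : \lvert k_{l} \rvert = \lvert k \rvert_{\infty}\})$, with the seed block as base. Assume all $k' \in \mathcal{K}_{N}$ with smaller such pair lie in $A(\mathcal{N})$, and let $k \in \mathcal{K}_{N}$ with $\lvert k \rvert_{\infty} = p+1 \geq 3$. If $k$ has at least two nonzero coordinates, pick an index $i$ with $\lvert k_{i} \rvert = p+1$, set $n := (k_{i}/\lvert k_{i} \rvert) e_{i}$ and $m := k - n$; since only the $i$-th coordinate is changed, $m$ has strictly fewer coordinates of modulus $p+1$ than $k$ (and $\lvert m \rvert_{\infty} = p$ if that count was $1$), so $m \in A(\mathcal{N})$ by the induction hypothesis, while $m$ and $n = \pm e_{i}$ are linearly independent --- this is the one place where the hypothesis that $k$ is not a coordinate-axis vector is used --- and $\lvert m \rvert \geq p \geq 2 > 1 = \lvert n \rvert$; Proposition 4.2(2) then gives $k = m + n \in A(\mathcal{N})$. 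The coordinate-axis vectors $k = q e_{i}$ with $3 \leq q \leq N$, which this step does not reach directly, are obtained instead from $q e_{i} = (q e_{i} - e_{j}) + e_{j}$ for some $j \neq i$: the vector $q e_{i} - e_{j}$ has two nonzero coordinates and sup-norm $q$, hence lies in $A(\mathcal{N})$ by the case just treated, it is linearly independent of $e_{j}$, and has length $\sqrt{q^{2}+1} \neq 1$, so Proposition 4.2(2) yields $k \in A(\mathcal{N})$. This closes the induction, whence $A(\mathcal{N}) \supseteq \{k \in \mathbb{Z}^{3}: 0 < \lvert k \rvert_{\infty} \leq N\} = \mathcal{K}_{N}$, and the reverse inclusion is immediate.

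The one genuinely delicate point is the bookkeeping: for every chosen splitting $k = m + n$ one must verify that $m$ and $n$ are linearly independent and, when Proposition 4.2(2) is invoked, of distinct lengths, and one must confirm that the finitely many degenerate positions are all reached --- namely the coordinate-axis vectors, handled by the auxiliary identity above, and the handful of small vectors inside the seed block for which $\lvert m \rvert = \lvert n \rvert$ under every natural splitting, handled by Proposition 4.2(3). This is the three-dimensional analogue of the lattice combinatorics of \cite{R04} and \cite{LW04}; with the explicit bracket formula (33) of Proposition 4.1 in hand it is routine but tedious, so I would present the seed-block computation and the generic inductive step in full and dispatch the degenerate configurations more briefly, with reference to those works.
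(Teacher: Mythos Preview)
Your approach is essentially the same as the paper's --- bootstrap from the three coordinate directions via the closure properties of Proposition 4.2, then invoke H$\ddot{\mathrm{o}}$rmander --- and your explicit induction on $(\lvert k\rvert_\infty,\ \#\{l:\lvert k_l\rvert=\lvert k\rvert_\infty\})$ together with the separate treatment of the axis vectors is a clean fleshing-out of the paper's one-line ``similarly we can obtain all indices in $\mathcal{K}_N$''.

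One point needs tightening. You repeatedly treat Proposition 4.2(3) as if it asserted $m+n\in A(\mathcal{N})$ whenever $\lvert m\rvert=\lvert n\rvert$, but it does not: it says only that the brackets $[[F_0,V],W]$ span a \emph{four}-dimensional subspace of $\mathcal{U}_{m+n}$, whereas $\mathcal{U}_{m+n}=\mathcal{R}_{m+n}\oplus\mathcal{S}_{m+n}\oplus\tilde{\mathcal{R}}_{m+n}\oplus\tilde{\mathcal{S}}_{m+n}$ is eight-dimensional. Thus the sentence ``Proposition 4.2(3) puts every face-diagonal vector $\pm e_i\pm e_j$ in $A(\mathcal{N})$'' is premature, and Proposition 4.2(2) cannot then be applied with such a face diagonal as input, since its hypothesis is $m\in A(\mathcal{N})$. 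The paper's own argument, which you should make explicit, handles this by a detour: from $e_1,e_2$ one obtains only the four-dimensional piece of $\mathcal{U}_{(1,1,0)}$; bracketing that partial subspace against the full $\mathcal{U}_{(0,0,1)}$ (lengths $\sqrt2\neq1$, so the computation behind Proposition 4.2(2) still applies) already produces all of $\mathcal{U}_{(1,1,1)}$; and then Proposition 4.2(2) with $m=(1,1,1)$, $n=(0,0,-1)$ (lengths $\sqrt3\neq1$) returns the \emph{full} $\mathcal{U}_{(1,1,0)}$. Once this two-step detour is written out for the face diagonals, every subsequent step of your seed-block construction and induction uses only Proposition 4.2(2) with genuinely distinct lengths, and the argument goes through unchanged.
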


\begin{proof}
From Proposition 4.2 (3), summing $(1,0,0), (0,1,0)$ gives four-dimensional subspace of $\mathcal{U}_{(1,1,0)}$, which combining with $\mathcal{U}_{(0,0,1)}$ gives $\mathcal{U}_{(1,1,1)}$. Subtracting $(0,0,1)$ from $(1,1,1)$ also gives $\mathcal{U}_{(1,1,0)}$. Similarly we can obtain all indices of norm two as well as all indices in $\mathcal{K}_{N}$, for any $N$ that is \emph{a priori} fixed. Therefore, $\mathcal{K}_{N} \subset A(\mathcal{N})$ and thus $A(\mathcal{N}) = \mathcal{K}_{N}$ (by definition of $A(\mathcal{N})$, we have $A(\mathcal{N}) \subset \mathcal{K}_{N}$). Hence, by H$\ddot{\mathrm{o}}$rmander's condition of hypoellipticity, transition semigroup generated by (\ref{15}), (\ref{16}) is regular (see e.g. \cite{S83}). This completes the proof of Proposition 4.3. 
\end{proof}

\subsection{Recurrence of neighborhoods of the origin}

In this section, following \cite[Section 3]{EM01}, except that therein the domain was $\mathbb{T}^{2}$ which makes it easier than the current case of $\mathbb{T}^{3}$, we show that in system (\ref{15}), (\ref{16}), starting anywhere, fixing any neighborhood of the origin, the corresponding dynamics enters this neighborhood infinitely often. We denote by 
\begin{equation*}
\mathcal{B}(c) \triangleq \left\{X \in L^{2}(\mathbb{T}^{3}): \lVert X \rVert_{L^{2}} = \left(\int_{\mathbb{T}^{3}} \lvert X(x) \rvert^{2} dx \right)^{\frac{1}{2}} \leq c \right\}. 
\end{equation*}
\begin{proposition}
For fixed constants $C_{0}, C_{1} > 0$, suppose that $\mathcal{B}_{0} \triangleq \mathcal{B}(C_{0}), \mathcal{B}_{1} \triangleq \mathcal{B}(C_{1})$ are two arbitrary balls around the origin; additionally suppose $h > 0$ is also fixed. Then there exists $T_{0} = T_{0} (C_{0}, C_{1}) > 0$ such that for any $T \geq T_{0}$, there exists a constant $p^{\ast} > 0$ such that 
\begin{equation*}
\inf_{(u_{0}, b_{0}) \in \mathcal{B}_{0}} \mathbb{P}_{(u_{0}, b_{0})} \{ (u(t), b(t)) \in \mathcal{B}_{1} \text{ for all } t \in [T, T+h]\} \geq p^{\ast}.
\end{equation*}
\end{proposition}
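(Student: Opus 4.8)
The plan is to follow \cite[Section 3]{EM01}, adapting it to the coupled system. Passing to Fourier variables and identifying $(u,b)$ with the vector $X=(r,s,\tilde r,\tilde s)\in U$, Parseval's identity turns $\mathcal{B}_0$ and $\mathcal{B}_1$ into Euclidean balls of radii proportional to $C_0$ and $C_1$, and all norms on the finite-dimensional space $U$ are equivalent; in particular the trilinear forms generated by the four MHD nonlinearities obey $|\langle \mathcal{Q}(P,R),S\rangle|\lesssim_{N}\|P\|_{L^2}\|R\|_{L^2}\|S\|_{L^2}$, where $\mathcal{Q}$ denotes a generic one of these bilinear maps. The other structural fact I would use is that the Galerkin-truncated MHD nonlinearity conserves the total energy: testing \eqref{15} against $u$ and \eqref{16} against $b$ and adding, the nonlinear contribution vanishes, because $\langle(u\cdot\nabla)u,u\rangle=\langle(u\cdot\nabla)b,b\rangle=0$ and $\langle\mathcal{P}((b\cdot\nabla)b),u\rangle+\langle(b\cdot\nabla)u,b\rangle=0$, and each such identity persists after the Galerkin truncation since the truncation is self-adjoint and $u,b$ already lie in its range (so that $\mathcal{P}u=u$ there).

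Next I would split $X=Y+Z$, where $Z=(r^{Z},s^{Z},\tilde r^{Z},\tilde s^{Z})$ is the Ornstein--Uhlenbeck process solving the linear part $dZ=\Delta Z\,dt+dW$ with $Z(0)=0$ (so $Z$ is a centered Gaussian process with continuous paths, supported on the forced modes $\mathcal{N}$), and $Y:=X-Z$ solves, for each realization, the \emph{deterministic} polynomial system $\dot Y=\Delta Y+\mathcal{Q}(Y+Z)$, $Y(0)=(u_0,b_0)$, with $\mathcal{Q}$ now denoting the full MHD nonlinearity. Fix a parameter $\delta>0$ to be chosen and set $\mathcal{E}_\delta:=\{\sup_{t\in[0,T+h]}\|Z(t)\|_{L^2}\le\delta\}$. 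On $\mathcal{E}_\delta$, pairing the $Y$-system against $Y$ as above, the pure-$Y$ nonlinear terms cancel by the energy identity; each remaining term carries at least one factor $Z$, hence is $\lesssim_N \delta\|Y\|_{L^2}^2+\delta^2\|Y\|_{L^2}$; and $\langle\Delta Y,Y\rangle=-\|\nabla Y\|_{L^2}^2\le-\|Y\|_{L^2}^2$ by the Poincar\'e inequality (all $|k|\ge1$). Choosing $\delta$ small (depending only on the fixed $N$) and using Young's inequality gives $\tfrac{d}{dt}\|Y\|_{L^2}^2\le-\|Y\|_{L^2}^2+C\delta^4$, whence $\|Y(t)\|_{L^2}^2\le e^{-t}C_0^2+C\delta^4$ for all $t\ge0$; this a priori bound also shows $Y$ is global, so that $X=Y+Z$ is indeed the solution of \eqref{15}, \eqref{16}.

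Now fix $T_0=T_0(C_0,C_1)>0$ with $e^{-T_0}C_0^2\le C_1^2/8$, and shrink $\delta$ further so that in addition $C\delta^4\le C_1^2/8$ and $\delta\le C_1/2$. For every $T\ge T_0$, every $t\in[T,T+h]$, and every $(u_0,b_0)\in\mathcal{B}_0$, on the event $\mathcal{E}_\delta$ we obtain $\|X(t)\|_{L^2}\le\|Y(t)\|_{L^2}+\|Z(t)\|_{L^2}\le C_1/2+C_1/2=C_1$, i.e. $(u(t),b(t))\in\mathcal{B}_1$ for the whole interval $[T,T+h]$. Since $\mathcal{E}_\delta$ is an event in the driving noise alone, independent of the starting point, the displayed infimum is bounded below by $p^{\ast}:=\mathbb{P}(\mathcal{E}_\delta)$. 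Finally $p^{\ast}>0$ because $Z$ is a centered Gaussian process on $C([0,T+h];U)$ with $Z(0)=0$, so the origin lies in the support of its law (the closure of its Cameron--Martin space), and hence every $\delta$-ball around $0$ in the sup-norm carries positive probability.

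The step I expect to be the real work is the energy estimate for $Y$: one must enumerate all the mixed $Y$--$Z$ and pure $Z$--$Z$ contributions coming from the four nonlinear terms $-\mathcal{P}((u\cdot\nabla)u)$, $\mathcal{P}((b\cdot\nabla)b)$, $-(u\cdot\nabla)b$, $(b\cdot\nabla)u$, check that none of them obstructs absorption into $-\|Y\|_{L^2}^2+C\delta^4$, and verify that the smallness threshold for $\delta$ and the constant $C$ depend only on the once-and-for-all fixed truncation $N$, not on the initial datum. It is precisely this uniformity that produces a single $p^{\ast}$ valid for the infimum over $\mathcal{B}_0$. The exponential decay of the deterministic flow and the Gaussian small-ball bound are standard and require no new ideas beyond those of \cite{EM01}.
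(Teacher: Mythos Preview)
Your argument is correct and follows the same overall strategy as the paper (split off the noise, derive a deterministic energy inequality for the remainder on a small-noise event, use exponential decay plus the Gaussian small-ball estimate). Two implementation choices differ slightly and are worth noting. First, you subtract the Ornstein--Uhlenbeck process $Z$ solving $dZ=\Delta Z\,dt+dW$, whereas the paper subtracts the raw Wiener increment $\tilde f(t)=W(t)-W(0)$; this leaves the paper with an extra linear forcing term $\Delta\tilde f$ in the $v$-equation, which it then has to absorb separately. Second, you control the mixed $Y$--$Z$ trilinear terms by invoking equivalence of norms on the finite-dimensional space $U$, while the paper stays at the PDE level and uses the Sobolev embedding $H^{1}(\mathbb{T}^{3})\hookrightarrow L^{4}(\mathbb{T}^{3})$ together with Poincar\'e; correspondingly its smallness condition is phrased on $\lVert \Delta\tilde f\rVert_{L^{2}}$ rather than on $\lVert Z\rVert_{L^{2}}$. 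Your choices are arguably cleaner for the Galerkin-truncated problem (the $N$-dependence of the constants is explicit and harmless, and there is no spurious $\Delta\tilde f$ term), while the paper's route has the advantage that the estimates are dimension-free in form and would transfer more directly to an infinite-dimensional setting. Either way the conclusion and the required uniformity in the initial datum go through.
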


\begin{proof}
We define $P_{N}$ to be the projection operator onto the Fourier modes less than or equal to $N$ in absolute value, multiply (\ref{15}), (\ref{16}) by $e^{ik\cdot x}$ and then sum over $k \in \mathcal{K}_{N}$ to deduce  
\begin{subequations}
\begin{align}
& du = [\Delta u - P_{N} \mathcal{P} ((u\cdot \nabla) u) + P_{N} \mathcal{P} ((b\cdot \nabla) b) ]dt + dW_{u},\\
& db = [\Delta b - P_{N} ((u\cdot \nabla) b) + P_{N} ((b\cdot\nabla) u) ] dt + dW_{b}. 
\end{align}
\end{subequations}
We define $v(t) \triangleq u(t) - \tilde{f}_{u}(t), \tilde{f}_{u}(t) \triangleq W_{u}(t) - W_{u}(0), B(t) \triangleq b(t) - \tilde{f}_{b}(t), \tilde{f}_{b}(t) \triangleq W_{b}(t) - W_{b}(0)$ so that 
\begin{equation}
\partial_{t} v= \Delta v - P_{N} \mathcal{P} ((u\cdot\nabla) u) + P_{N} \mathcal{P} ((b\cdot\nabla) b) + \Delta \tilde{f}_{u}, 
\end{equation}
\begin{equation}
\partial_{t}B = \Delta B - P_{N} ((u\cdot\nabla) b) + P_{N} ((b\cdot\nabla) u) + \Delta \tilde{f}_{b},
\end{equation}
by (81a), (81b). Taking $L^{2}$-inner products of (82), (83) with $v, B$ respectively give 
\begin{equation}
\begin{split} 
& \frac{1}{2}\partial_{t} (\lVert v \rVert_{L^{2}}^{2} + \lVert B \rVert_{L^{2}}^{2}) + \lVert \nabla v \rVert_{L^{2}}^{2} + \lVert \nabla B \rVert_{L^{2}}^{2} \\
=& -\int (u\cdot\nabla) \tilde{f}_{u} \cdot v  - \int (u\cdot\nabla) \tilde{f}_{b} \cdot B \\
& + \int (b\cdot\nabla) \tilde{f}_{b} \cdot v + (b\cdot\nabla) \tilde{f}_{u} \cdot B + \int \Delta \tilde{f}_{u} \cdot v + \Delta \tilde{f}_{b} \cdot B 
\end{split} 
\end{equation}
where we used (\ref{1c}), as well as the noise and consequently those of $v, B$. We estimate 
\begin{equation}
\begin{split} 
& -\int (u\cdot\nabla) \tilde{f}_{u} \cdot v - \int (u\cdot\nabla) \tilde{f}_{b} \cdot B + \int (b\cdot\nabla) \tilde{f}_{b} \cdot v + \int (b\cdot\nabla) \tilde{f}_{u} \cdot B \\
\lesssim& (\lVert u \rVert_{L^{4}} + \lVert b \rVert_{L^{4}}) (\lVert \nabla \tilde{f}_{u} \rVert_{L^{4}} + \lVert \nabla \tilde{f}_{b} \rVert_{L^{4}})(\lVert v \rVert_{L^{2}} + \lVert B \rVert_{L^{2}}) \\
\lesssim& (\lVert \nabla u\rVert_{L^{2}} + \lVert \nabla b \rVert_{L^{2}}) (\lVert \Delta \tilde{f}_{u} \rVert_{L^{2}} + \lVert \Delta \tilde{f}_{b} \rVert_{L^{2}})(\lVert v \rVert_{L^{2}} + \lVert B \rVert_{L^{2}})  \\
\lesssim& (\lVert \nabla v\rVert_{L^{2}} + \lVert \nabla B \rVert_{L^{2}})(\lVert \Delta \tilde{f}_{u} \rVert_{L^{2}} + \lVert \Delta \tilde{f}_{b} \rVert_{L^{2}})(\lVert v \rVert_{L^{2}} + \lVert B \rVert_{L^{2}}) \\
& + (\lVert \nabla \tilde{f}_{u} \rVert_{L^{2}} + \lVert \nabla \tilde{f}_{b} \rVert_{L^{2}})(\lVert \Delta \tilde{f}_{u} \rVert_{L^{2}} + \lVert \Delta \tilde{f}_{b} \rVert_{L^{2}}) (\lVert \nabla v\rVert_{L^{2}} + \lVert \nabla B \rVert_{L^{2}}) \\
\leq& \frac{1}{8} (\lVert \nabla v \rVert_{L^{2}}^{2} + \lVert \nabla B \rVert_{L^{2}}^{2}) + C(\lVert \Delta \tilde{f}_{u} \rVert_{L^{2}}^{2} + \lVert \Delta \tilde{f}_{b} \rVert_{L^{2}}^{2})(\lVert v \rVert_{L^{2}}^{2} + \lVert B \rVert_{L^{2}}^{2})\\
& \hspace{33mm} + C(\lVert \nabla \tilde{f}_{u} \rVert_{L^{2}}^{2} + \lVert \nabla \tilde{f}_{b} \rVert_{L^{2}}^{2} ) (\lVert \Delta \tilde{f}_{u} \rVert_{L^{2}}^{2} + \lVert \Delta \tilde{f}_{b} \rVert_{L^{2}}^{2}) 
\end{split} 
\end{equation}
where we used H$\ddot{\mathrm{o}}$lder's inequality, Sobolev embedding $H^{1}(\mathbb{T}^{3}) \hookrightarrow L^{4}(\mathbb{T}^{3})$, Poincar$\acute{\mathrm{e}}$ and Young's inequalities. Moreover, we have the estimate of 
\begin{equation*}
\begin{split} 
\int \Delta \tilde{f}_{u} \cdot v + \Delta \tilde{f}_{b} \cdot B 
\leq& \lVert \nabla \tilde{f}_{u} \rVert_{L^{2}} \lVert \nabla v \rVert_{L^{2}} + \lVert \nabla \tilde{f}_{b} \rVert_{L^{2}} \lVert \nabla B \rVert_{L^{2}}\\
\leq& \frac{1}{8} (\lVert \nabla v \rVert_{L^{2}}^{2} + \lVert \nabla B \rVert_{L^{2}}^{2}) + C ( \lVert \Delta \tilde{f}_{u} \rVert_{L^{2}}^{2} + \lVert \Delta \tilde{f}_{b} \rVert_{L^{2}}^{2}),
\end{split} 
\end{equation*}
by H$\ddot{\mathrm{o}}$lder's, Young's and Poincar$\acute{\mathrm{e}}$ inequalities; this computation along with (85) applied to (84) give
\begin{equation}
\begin{split} 
& \frac{1}{2} \partial_{t} (\lVert v\rVert_{L^{2}}^{2} + \lVert B \rVert_{L^{2}}^{2}) \\
\leq& -\frac{3}{4} \left(\lVert \nabla v \rVert_{L^{2}}^{2} + \lVert \nabla B \rVert_{L^{2}}^{2}\right) + C_{2}(\lVert \Delta \tilde{f}_{u} \rVert_{L^{2}}^{2} + \lVert \Delta \tilde{f}_{b} \rVert_{L^{2}}^{2})(\lVert v \rVert_{L^{2}}^{2} + \lVert B \rVert_{L^{2}}^{2}) \\
& \hspace{10mm}  + C (\lVert \Delta \tilde{f}_{u} \rVert_{L^{2}}^{2} + \lVert \Delta \tilde{f}_{b} \rVert_{L^{2}}^{2} + \lVert \Delta \tilde{f}_{u} \rVert_{L^{2}}^{4} + \lVert \Delta \tilde{f}_{b} \rVert_{L^{2}}^{4}) \\
\leq& -\left(\frac{3}{4} - C_{2} (\lVert \Delta \tilde{f}_{u} \rVert_{L^{2}}^{2} + \lVert \Delta \tilde{f}_{b} \rVert_{L^{2}}^{2})\right) (\lVert v \rVert_{L^{2}}^{2} + \lVert B \rVert_{L^{2}}^{2}) \\
&+ C (\lVert \Delta \tilde{f}_{u} \rVert_{L^{2}}^{2} + \lVert \Delta \tilde{f}_{b} \rVert_{L^{2}}^{2} + \lVert \Delta \tilde{f}_{u} \rVert_{L^{2}}^{4} + \lVert \Delta \tilde{f}_{b} \rVert_{L^{2}}^{4}) 
\end{split}
\end{equation}
for some $C, C_{2} > 0$. Now we fix any $\delta > 0$ and define for any $T > 0$, 
\begin{equation*}
\Omega' (\delta, T) \triangleq \left\{g \in C([0, T+h]: L^{2}(\mathbb{T}^{3})): \sup_{t \in [0,T]} \lVert \Delta g(t) \rVert_{L^{2}}^{2} \leq \min \{\delta, \frac{3}{16C_{2}} \}\right\}.
\end{equation*}
Thus, if $\tilde{f}_{u}, \tilde{f}_{b} \in \Omega' (\delta, T)$, then 
\begin{equation*}
C_{2} (\lVert \Delta \tilde{f}_{u} \rVert_{L^{2}}^{2} + \lVert \Delta \tilde{f}_{b} \rVert_{L^{2}}^{2}) \leq \frac{3}{8}
\end{equation*}
so that (86) leads to 
\begin{equation}
\begin{split} 
&\lVert v(t) \rVert_{L^{2}}^{2} + \lVert B(t) \rVert_{L^{2}}^{2}\\
\leq& (\lVert v(0) \rVert_{L^{2}}^{2} + \lVert B(0) \rVert_{L^{2}}^{2}) e^{-\frac{3}{4} t} + C \left( (\min\{\delta, \frac{3}{16 C_{2 }} \}) + (\min\{\delta, \frac{3}{16 C_{2 }} \})^{2}\right). 
\end{split}
\end{equation} 
Therefore, if $\lVert u(0) \rVert_{L^{2}}^{2} + \lVert b(0) \rVert_{L^{2}}^{2} < C_{0}$, then for any $C_{1} > 0$, we can take $T> 0$ large enough so that $\displaystyle{e^{-\frac{3}{4} T} < \frac{C_{1}}{16 C_{0}}}$, and then $\delta > 0$ small enough so that 
\begin{equation*}
C \left( (\min\{\delta, \frac{3}{16 C_{2 }} \}) + (\min\{\delta, \frac{3}{16 C_{2 }} \})^{2}\right) < \frac{C_{1}}{16}
\end{equation*}
which leads to 
\begin{equation*}
\displaystyle{(\lVert v\rVert_{L^{2}}^{2} + \lVert B \rVert_{L^{2}}^{2})(T) < \frac{C_{1}}{8}}
\end{equation*}
due to (87). This implies that because $\displaystyle{\lVert v(t) \rVert_{L^{2}}^{2} + \lVert B(t) \rVert_{L^{2}}^{2}}$ is decreasing, it holds that $\displaystyle{\lVert v(t) \rVert_{L^{2}}^{2} + \lVert B(t) \rVert_{L^{2}}^{2} < \frac{C_{1}}{8}}$ for all $t \in [T, T+h]$.

Now because $\tilde{f}_{u}, \tilde{f}_{b} \in \Omega' (\delta, T)$, by taking $\delta$ small enough, we may assume $\displaystyle{\sup_{t \in [T, T+h]} \left(\lVert (\tilde{f}_{u}, \tilde{f}_{b})(t) \rVert_{L^{2}}^{2}\right) < \frac{C_{1}}{8}}$. Therefore, for any $t \in [T, T+h]$, because $(a+b)^{p} \leq 2^{p} (a+b)^{p}$ for any $a, b \geq 0, p \in [0, \infty)$, 
\begin{equation*}
\lVert u(t) \rVert_{L^{2}}^{2} + \lVert b(t) \rVert_{L^{2}}^{2} \leq C_{1}. 
\end{equation*}
Finally, for any $T \in (0, \infty), \delta > 0, \Omega'(\delta, T)$ is an open set in the supremum topology and hence $\mathbb{P} (\Omega' (\delta, T)) > 0$. This completes the proof of Proposition 4.4. 
\end{proof}

\begin{proposition}
Suppose $\lVert u(0) \rVert_{L^{2}}^{2} + \lVert b(0) \rVert_{L^{2}}^{2} > C^{2}, \mathbb{P}$-a.s. for some $C> 0$ such that 
\begin{equation}
C^{2} > \epsilon_{0}^{u} + \epsilon_{0}^{b}  \text{ where } \epsilon_{0}^{u} \triangleq \sum_{\lvert k \rvert \leq N} \lvert q_{uk}\rvert^{2}, \hspace{1mm} \epsilon_{0}^{b} \triangleq \sum_{\lvert k \rvert \leq N} \lvert q_{bk} \rvert^{2} \text{ and } \delta \triangleq 1 - \frac{(\epsilon_{0}^{u} + \epsilon_{0}^{b})}{C^{2}}.
\end{equation}
Then 
\begin{equation}
\mathbb{P} \{\tau_{\mathcal{C}} (u(0), b(0)) \geq t \} \leq \frac{\mathbb{E} [\lVert u(0) \rVert_{L^{2}}^{2} + \lVert b(0) \rVert_{L^{2}}^{2}]}{C^{2}} e^{-2\delta t} 
\end{equation}
where $\mathcal{C} \triangleq \mathcal{B}(C)$ and 
\begin{equation*}
\tau_{\mathcal{C}} \triangleq   \inf\{t > 0: (u(t), b(t)) \in \mathcal{C} \text{ given } (u(0), b(0)) \}.
\end{equation*}
\end{proposition}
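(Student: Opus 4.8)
The plan is to run the Lyapunov/supermartingale argument of \cite[Section 3]{EM01}, using the total energy $E(t)\triangleq\lVert u(t)\rVert_{L^{2}}^{2}+\lVert b(t)\rVert_{L^{2}}^{2}$ as the test functional and converting the drift inequality it satisfies, valid as long as the trajectory stays outside $\mathcal{C}$, into the exponential tail bound (89) by optional stopping. The one structural ingredient special to MHD that I would use is the cancellation of all four nonlinear terms in the $L^{2}$ energy balance.

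First I would record the energy balance for the Galerkin system. Applying It\^o's formula to $E(t)$ and pairing (81a), (81b) with $u$ and $b$ in $L^{2}(\mathbb{T}^{3})$, the nonlinear contribution is
\[
-\int (u\cdot\nabla)u\cdot u + \int (b\cdot\nabla)b\cdot u - \int (u\cdot\nabla)b\cdot b + \int (b\cdot\nabla)u\cdot b = 0,
\]
since $\mathcal{P}$ and $P_{N}$ are orthogonal projections fixing the (already divergence-free, band-limited) fields $u,b$, the first and third integrals vanish by $\nabla\cdot u=0$, and the second and fourth cancel after an integration by parts using $\nabla\cdot b=0$; this is precisely the energy cancellation already exploited in Section~3. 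Keeping the dissipation $-2(\lVert\nabla u\rVert_{L^{2}}^{2}+\lVert\nabla b\rVert_{L^{2}}^{2})$, the It\^o (trace) correction $\epsilon_{0}^{u}+\epsilon_{0}^{b}$, and using Poincar\'e's inequality (the retained modes obey $\lvert k\rvert\ge 1$, so $\lVert\nabla u\rVert_{L^{2}}^{2}\ge\lVert u\rVert_{L^{2}}^{2}$), I obtain
\[
dE(t)\;\le\;(-2E(t)+\epsilon_{0}^{u}+\epsilon_{0}^{b})\,dt+dM_{t},
\]
where $M_{t}$ is the local martingale coming from the stochastic integrals against $W_{u},W_{b}$.

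Next I would localize to the complement of $\mathcal{C}$. For $t<\tau_{\mathcal{C}}$ one has $E(t)>C^{2}>\epsilon_{0}^{u}+\epsilon_{0}^{b}$, hence $\epsilon_{0}^{u}+\epsilon_{0}^{b}\le\frac{\epsilon_{0}^{u}+\epsilon_{0}^{b}}{C^{2}}E(t)$; substituting this and recalling $\delta=1-\frac{\epsilon_{0}^{u}+\epsilon_{0}^{b}}{C^{2}}$ from (88) gives $dE(t)\le-2\delta E(t)\,dt+dM_{t}$ on $\{t<\tau_{\mathcal{C}}\}$. It\^o's formula applied to $e^{2\delta t}E(t)$ then shows that the stopped process $Y_{t}\triangleq e^{2\delta(t\wedge\tau_{\mathcal{C}})}E(t\wedge\tau_{\mathcal{C}})$ has nonpositive drift, so it is a nonnegative supermartingale. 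Since (\ref{15}), (\ref{16}) is finite-dimensional its solutions are global and $\sigma_{n}\triangleq\inf\{t:E(t)\ge n\}\uparrow\infty$ almost surely; localizing at $t\wedge\tau_{\mathcal{C}}\wedge\sigma_{n}$ (where the stopped $M$ is a genuine martingale) and letting $n\to\infty$ via Fatou's lemma yields $\mathbb{E}[Y_{t}]\le\mathbb{E}[Y_{0}]=\mathbb{E}[E(0)]$.

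Finally I would read off the tail estimate: on $\{\tau_{\mathcal{C}}\ge t\}$ we have $t\wedge\tau_{\mathcal{C}}=t$ and, by continuity of paths, $E(t)\ge C^{2}$, so
\[
\mathbb{E}[\lVert u(0)\rVert_{L^{2}}^{2}+\lVert b(0)\rVert_{L^{2}}^{2}]\;\ge\;\mathbb{E}[Y_{t}]\;\ge\;\mathbb{E}[\mathbf{1}_{\{\tau_{\mathcal{C}}\ge t\}}e^{2\delta t}E(t)]\;\ge\;C^{2}e^{2\delta t}\,\mathbb{P}\{\tau_{\mathcal{C}}\ge t\},
\]
and dividing by $C^{2}e^{2\delta t}$ gives (89). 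I do not anticipate a genuine obstacle here: the scheme mirrors the recurrence step of \cite{EM01}, and the only points needing care are verifying the full cancellation of all four nonlinear terms once the Galerkin and Leray projections are accounted for, and the localization forced by the unbounded weight $e^{2\delta t}$, which is what legitimizes applying the supermartingale bound up to time $t\wedge\tau_{\mathcal{C}}$ for every $t$.
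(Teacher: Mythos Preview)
Your proposal is correct and follows essentially the same route as the paper: both apply It\^o's formula to the weighted energy $e^{2\delta t}(\lVert u\rVert_{L^{2}}^{2}+\lVert b\rVert_{L^{2}}^{2})$, use the cancellation of the nonlinear terms and Poincar\'e's inequality to see that the drift is nonpositive before $\tau_{\mathcal{C}}$, localize with a sequence of stopping times to control the martingale part, pass to the limit, and read off the tail bound from $E(t\wedge\tau_{\mathcal{C}})\ge C^{2}$ on $\{\tau_{\mathcal{C}}\ge t\}$. The only cosmetic differences are your choice of localizing times $\sigma_{n}=\inf\{t:E(t)\ge n\}$ versus the paper's $S_{n}=\inf\{t:E(t)>nE(0)\}$ and your explicit appeal to Fatou's lemma where the paper simply invokes boundedness and continuity of $E$.
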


\begin{proof}
We define 
\begin{equation*}
Y(t,u) \triangleq  e^{2\delta t} \lVert u(t) \rVert_{L^{2}}^{2}, \hspace{3mm} Z(t,b) \triangleq e^{2\delta t} \lVert b(t) \rVert_{L^{2}}^{2}
\end{equation*}
and apply Ito's formula with $F(x,t) = x^{2}$ and subsequently with $F(x,t) = e^{2\delta t} x$ on (81a), (81b) to deduce in sum 
\begin{equation}
\begin{split} 
&d(Y+Z)(t)\\
=& 2\delta e^{2\delta t} (\lVert u\rVert_{L^{2}}^{2} + \lVert b\rVert_{L^{2}}^{2}) dt + e^{2\delta t} [-2 \lVert \nabla u\rVert_{L^{2}}^{2} - 2 \lVert \nabla b\rVert_{L^{2}}^{2} + \epsilon_{0}^{u} + \epsilon_{0}^{b}]dt  \\
&+ e^{2\delta t} [(2u, dW_{u}) + (2b, dW_{b})]  \\
\leq& \left( \epsilon_{0}^{u} + \epsilon_{0}^{b} - 2 \left(\frac{\epsilon_{0}^{u} + \epsilon_{0}^{b}}{C^{2}} \right) (\lVert u\rVert_{L^{2}}^{2} + \lVert b\rVert_{L^{2}}^{2} ) \right) e^{2\delta t} dt \\
&+ 2 e^{2\delta t} \left( \langle u, dW_{u} \rangle + \langle b, dW_{b} \rangle \right)  
\end{split}
\end{equation} 
by Poincar$\acute{\mathrm{e}}$ inequality and (88), definition of $\delta$. We now define $\{S_{n}\}_{n \geq 1}$ where 
\begin{equation*}
S_{n} \triangleq \inf \{t > 0: \lVert u(t) \rVert_{L^{2}}^{2} + \lVert b(t) \rVert_{L^{2}}^{2} > n (\lVert u(0) \rVert_{L^{2}}^{2} + \lVert b(0) \rVert_{L^{2}}^{2}) \}
\end{equation*}
and $T \triangleq \tau_{\mathcal{C}} \wedge S_{n} \wedge t$ for any fixed $t> 0$. Integrating (90) over $[0, T]$ and taking expected value gives 
\begin{equation}
\begin{split} 
& \mathbb{E}[Y(T) + Z(T)] \\
\leq& \mathbb{E}[Y(0) + Z(0)] + (\epsilon_{0}^{u} + \epsilon_{0}^{b}) \mathbb{E}[e^{2\delta T} \int_{0}^{T} 1 - \frac{2(\lVert u\rVert_{L^{2}}^{2} + \lVert b\rVert_{L^{2}}^{2})}{C^{2}} dt ]. 
\end{split}
\end{equation} 
Thus, for $t < T$, as we have $t < \tau_{\mathcal{C}}$, we obtain 
from (91) taking $n\to\infty$ 
\begin{equation}
\mathbb{E}[Y(t\wedge \tau_{\mathcal{C}}) + Z(t\wedge \tau_{\mathcal{C}})] \leq \mathbb{E}[Y(0) + Z(0)]
\end{equation}
because $\lVert u(t) \rVert_{L^{2}}^{2} + \lVert b(t) \rVert_{L^{2}}^{2}$ is bounded and continuous in time. Therefore, 
\begin{equation*}
\begin{split} 
 \mathbb{E} [\lVert u(0) \rVert_{L^{2}}^{2} + \lVert b(0) \rVert_{L^{2}}^{2}]
\geq& \mathbb{E}[Y(t\wedge \tau_{\mathcal{C}} + Z(t\wedge \tau_{\mathcal{C}})] \\
\geq& \mathbb{E} [Y(t) + Z(t) \lvert t \leq \tau_{\mathcal{C}}] \mathbb{P}(t\leq \tau_{\mathcal{C}})
\geq e^{2\delta t} C^{2} \mathbb{P} \{t \leq \tau_{\mathcal{C}}\} 
\end{split} 
\end{equation*}
where we used (92), the definition of $\tau_{\mathcal{C}}$. This implies (89) and completes the proof of Proposition 4.5. 
\end{proof}

\begin{proposition}
Let $h > 0, U_{1}$ be an open neighborhood of the origin. Then given any initial condition $(u(0), b(0))$, 
\begin{equation*}
\mathbb{P}_{u(0), b(0)} \{(u(nh, b(nh)) \in U_{1} \text{ for infinitely many } n \} = 1. 
\end{equation*}
\end{proposition}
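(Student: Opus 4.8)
The plan is to combine Propositions~4.4 and~4.5 through the strong Markov property of the (finite-dimensional, and by the energy estimate of the Preliminaries globally defined) solution process. Proposition~4.5 says the dynamics returns to a large ball $\mathcal{C}=\mathcal{B}(C)$ in an almost surely finite time; Proposition~4.4 says that each time the process is located in $\mathcal{C}$ there is probability at least a fixed $p^{\ast}>0$ that it stays inside a small ball $\mathcal{B}_{1}\subseteq U_{1}$ during an entire time interval of length $h$. Since any closed interval of length $h$ contains an integer multiple of $h$, such a ``successful'' interval automatically furnishes some $n$ with $(u(nh),b(nh))\in\mathcal{B}_{1}\subseteq U_{1}$. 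Running infinitely many such trials and using a conditional Borel--Cantelli argument then forces infinitely many successes, which is exactly the assertion.

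In detail, first choose $C_{1}>0$ with $\mathcal{B}(C_{1})\subseteq U_{1}$ (possible since the Galerkin state space is finite dimensional and $U_{1}$ is an open neighbourhood of the origin), then choose $C>0$ with $C^{2}>\epsilon_{0}^{u}+\epsilon_{0}^{b}$ so that Proposition~4.5 applies to $\mathcal{C}\triangleq\mathcal{B}(C)$, and with $C_{0}\triangleq C$ obtain $T_{0}=T_{0}(C_{0},C_{1})>0$ and $p^{\ast}>0$ from Proposition~4.4; fix any $T\geq T_{0}$. I would then define stopping times recursively by $\theta_{1}\triangleq\inf\{t\geq 0:(u(t),b(t))\in\mathcal{C}\}$ and $\theta_{k+1}\triangleq\inf\{t\geq\theta_{k}+T+h:(u(t),b(t))\in\mathcal{C}\}$ for $k\geq 1$. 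A conditional version of the bound (89) --- whose right-hand side is, after conditioning on the current state, a finite multiple of $e^{-2\delta t}$ --- together with the strong Markov property shows that every $\theta_{k}$ is finite almost surely (if the process already lies in $\mathcal{C}$ at the relevant time the conclusion is trivial). Setting $A_{k}\triangleq\{(u(t),b(t))\in\mathcal{B}_{1}\ \text{for all}\ t\in[\theta_{k}+T,\theta_{k}+T+h]\}$, the strong Markov property at $\theta_{k}$ combined with Proposition~4.4 --- applicable because $(u(\theta_{k}),b(\theta_{k}))\in\mathcal{C}=\mathcal{B}_{0}$ and $T\geq T_{0}$ --- gives $\mathbb{P}(A_{k}\mid\mathcal{F}_{\theta_{k}})\geq p^{\ast}$ almost surely.

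To finish, note the gap $\theta_{k+1}\geq\theta_{k}+T+h$ makes the intervals $[\theta_{k}+T,\theta_{k}+T+h]$ pairwise disjoint and increasing; each such interval, having length $h$, contains a point $n_{k}h$ with $n_{k}\in\mathbb{N}$, and the $n_{k}$ are strictly increasing. Hence on $A_{k}$ one has $(u(n_{k}h),b(n_{k}h))\in\mathcal{B}_{1}\subseteq U_{1}$, so $\{A_{k}\ \text{infinitely often}\}\subseteq\{(u(nh),b(nh))\in U_{1}\ \text{infinitely often}\}$. Finally, each $\overline{A}_{j}$ is $\mathcal{F}_{\theta_{j}+T+h}$-measurable and $\mathcal{F}_{\theta_{j}+T+h}\subseteq\mathcal{F}_{\theta_{k}}$ whenever $j<k$, so iterated conditioning yields $\mathbb{P}(\overline{A}_{K+1}\cap\cdots\cap\overline{A}_{K+M})\leq(1-p^{\ast})^{M}\to 0$ as $M\to\infty$ for each $K$; thus $\mathbb{P}\{A_{k}\ \text{occurs only finitely often}\}=0$ (equivalently, one invokes the conditional Borel--Cantelli lemma, since $\sum_{k}\mathbb{P}(A_{k}\mid\mathcal{F}_{\theta_{k}})=\infty$ a.s.). Since the initial condition was arbitrary, this proves the proposition, and together with Proposition~4.3 it establishes Harris' recurrence condition and hence completes the proof of Theorem~2.1.

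The genuinely MHD-specific analytic work is already contained in Propositions~4.4 and~4.5, so the difficulty here is organisational rather than computational. The points that need care are: (i) upgrading the expectation bound (89) to the conditional/pathwise form required to iterate the recurrence, which rests on the global existence and uniform $L^{2}$-moment estimate from the Preliminaries; (ii) the legitimacy of the strong Markov property for the Galerkin SDE at the stopping times $\theta_{k}$; and (iii) the bookkeeping guaranteeing that the ``stay in $\mathcal{B}_{1}$'' windows are disjoint and that each contains a lattice time $nh$, so that distinct successful trials really do produce distinct return times.
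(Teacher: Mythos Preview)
Your proof is correct and follows essentially the same approach as the paper's: recurrence to $\mathcal{C}$ via Proposition~4.5, a uniform chance $p^{\ast}$ of landing in $\mathcal{B}_{1}\subseteq U_{1}$ at a lattice time via Proposition~4.4 and the strong Markov property, and a Borel--Cantelli type argument to force infinitely many successes. The only difference is bookkeeping --- the paper fixes $T=mh$ an integer multiple of $h$ and indexes returns by discrete blocks $[(n-1)h,nh]$ rather than by your continuous stopping times $\theta_{k}$ --- but the substance is identical.
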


\begin{proof}
Again we denote $C> 0$ such that 
\begin{equation*}
\lVert u(0) \rVert_{L^{2}}^{2} + \lVert b(0) \rVert_{L^{2}}^{2} > C^{2} > \epsilon_{0}^{u} + \epsilon_{0}^{b}
\end{equation*}
and $\mathcal{C} \triangleq \mathcal{B}(C)$. By hypothesis, $U_{1}$ is open so that $\mathcal{B}_{1} \triangleq \mathcal{B}(C_{1}) \subset U_{1}$ for $C_{1}> 0$ sufficiently small. We let $\mathcal{B}_{0} \triangleq \mathcal{C}$ so that by Proposition 4.4, there exists $T_{0} = T_{0}(C, C_{1}) > 0$ such that for any $T \geq T_{0}$, there is $p^{\ast} > 0$ that satisfies 
\begin{equation}
\inf_{(u(0), b(0)) \in \mathcal{B}_{0}} \mathbb{P}_{(u(0), b(0))} \{(u(t), b(t)) \in \mathcal{B}_{1} \text{ for all } t \in [T, T+h] \} \geq p^{\ast}. 
\end{equation}
We let $\displaystyle{T \triangleq mh}$ for some $\displaystyle{m \in \mathbb{N}}$ such that $\displaystyle{T > (T_{0} + 2h)}$, set $\displaystyle{n^{\ast} \triangleq \frac{T}{h}}$ and $X_{n} \triangleq ((u(nh), b(nh))$. Thus, 
\begin{equation}
\begin{split} 
& \mathbb{P} \{X_{n+n^{\ast} -1} \in U_{1}: (u(t), b(t)) \in \mathcal{C} \text{ for some } t \in [(n-1)h, nh]\} \\
\geq& \mathbb{P} \{(u(nh + T-h), b(nh + T-h)) \in \mathcal{B}_{1}: \\
& \hspace{30mm} (u(t), b(t)) \in \mathcal{C} \text{ for some } t \in [(n-1)h, nh]\}
\geq p^{\ast}   
\end{split}
\end{equation} 
where we used that $\displaystyle{X_{n} = (u(nh), b(nh)), n^{\ast} = \frac{T}{h}}$, $\displaystyle{U_{1} \supset \mathcal{B}_{1}}$ and (93). Now we define for $k > 0$, 
\begin{equation*}
\begin{split} 
&\tau_{0} \triangleq \inf \{n \geq 1: (u(t), b(t)) \in \mathcal{C} \text{ for some } t \in [(n-1)h, nh]\},\\
& \tau_{k} \triangleq \inf \{n \geq \tau_{k-1} + (n^{\ast} +1): (u(t), b(t)) \in \mathcal{C} \text{ for some } t \in [(n-1)h, nh]\}.
\end{split}
\end{equation*} 
By Proposition 4.5, we know that 
\begin{equation*}
\mathbb{P} \{\inf \{s > 0: (u(s), b(s)) \in \mathcal{C} \} \geq t\} \leq \frac{\mathbb{E}[\lVert u(0) \rVert_{L^{2}}^{2} + \lVert b(0) \rVert_{L^{2}}^{2}]}{C^{2}} e^{-2\delta t} \to 0 
\end{equation*}
as $t\to\infty$. Hence, $\tau_{k}$ is finite for all $k$ $\mathbb{P}$-almost surely. We define 
\begin{equation*}
\#_{U_{1}} (n) \triangleq \text{ number of } k \in  [0, n] \text{ such that } X_{k} \in U_{1}.
\end{equation*} 
Then for any $n, M$ such that $M < n$, by (94) 
\begin{equation*}
\mathbb{P} \{\#_{U_{1}} (\tau_{n} + n^{\ast}) < M \} 
\leq (1- p^{\ast})^{n-M}.
\end{equation*}
As $p^{\ast} > 0$, we have $1- p^{\ast} < 1$ and hence $(1-p^{\ast})^{n-M} \to 0$ as $n\to + \infty$ which implies that $U_{1}$ is visited infinitely often $\mathbb{P}$-almost surely. This completes the proof of Proposition 4.6. 
\end{proof}

\emph{Proof of Theorem 2.1} 
We finally complete the proof of Theorem 2.1 following the approach of \cite[Theorem 1.1]{EM01}.  We denote by $P_{t}$ the transition density of the system (\ref{15}), (\ref{16}) and fix $\displaystyle{h > 0}$. As $\displaystyle{\int P_{h} (0, y) dy = 1}$, there exists some $y_{0}$ such that $P_{h}(0, y_{0}) > 0$. By Proposition 4.3, $P_{t}$ is regular and hence there exists a neighborhood of the origin, denoted by $A_{1}$, and a neighborhood of $y_{0}$, denoted by $A_{2}$, and $\delta_{0} > 0$ such that if $x \in A_{1}, y \in A_{2}$, then $P_{h}(x,y) > \delta_{0}$. 

We denote the normalized Lebesgue measure on $A_{2}$ by $m$. We show that for any measurable set $B \subset A_{2}$ such that $m(B) > 0$, denoting $X_{n} = (u(nh), b(nh))$ again, 
\begin{equation*}
\mathbb{P}_{(u(0), b(0))} \{X_{n} \in B \text{ for infinitely many } n \} = 1, 
\end{equation*}
which would imply that $X_{n}$ satisfies the Harris' condition and conclude the proof. 

We denote by $t_{n}$, the $n$-th time that $\{X_{n}\}$ entered $A_{1}$. By Proposition 4.6, each $t_{n}$ is finite with probability one. Let $\#_{B}(n)$ be the number of $k \in [0,n]$ such that $X_{k} \in B$. Then 
\begin{equation*}
\mathbb{P}_{(u(0), b(0))} \{X_{n} \in B \lvert X_{n-1} \in A_{1} \} 
= \int_{B} P_{h}(X_{n-1}, y) dy 
\geq m(B) \delta_{0} > 0,
\end{equation*}
where we used that $X_{n-1} \in A_{1}, y \in B \subset A_{2}$, and that $m(B) > 0, \delta_{0} > 0$. Now for any fixed $M, n$ such that $n > M > 0$, 
\begin{equation*}
\mathbb{P} \{\#_{B} (t_{n+1}) < M \} \leq (1- m(B)\delta_{0})^{n-M} \to 0 
\end{equation*}
as $n\to \infty$ because $m(B) \delta_{0} > 0$ so that $1- m(B) \delta_{0} < 1$. By arbitrariness of $M$, we conclude that $B$ is visited infinitely many times $\mathbb{P}$-almost surely and conclude the proof of Theorem 2.1.


\begin{thebibliography}{100}  % 100 is a random guess of the total number of 
%references 
 %%
\addtolength{\leftmargin}{0.2in} % sets up alignment with the following line. 
\setlength{\itemindent}{-0.2in} 

\bibitem{BD07} V. Barbu and G. Da Prato, \emph{Existence and ergodicity for the two-dimensional stochastic magneto-hydrodynamics equations}, Appl. Math. Optim., \textbf{56} (2007), 145--168. 

\bibitem{B50} G. K. Batchelor, \emph{On the spontaneous magnetic field in a conducting liquid in turbulent motion}, Proc. R. Soc. Lond. Ser. A, \textbf{201} (1950), 405--416. 

\bibitem{BT73} A. Bensoussan and R. Temam, \emph{Equations stochastiques du type Navier-Stokes}, J. Funct. Anal., \textbf{13} (1973), 195--222. 

\bibitem{BKL01} J. Bricmont, A. Kupiainen and R. Lefevere, \emph{Ergodicity of the 2D Navier-Stokes equations with random forcing}, Comm. Math. Phys., \textbf{224} (2001), 65--81. 

\bibitem{C51} S. Chandrasekhar, \emph{The invariant theory of isotropic turbulence in magneto-hydrodynamics}, Proc. R. Soc. Lond. Ser. A, \textbf{204} (1951), 435--449. 

\bibitem{CM10} I. Chueshov and A. Millet, \emph{Stochastic 2D hydrodynamical type systems: well posedness and large deviations}, Appl. Math. Optim., \textbf{61} (2010), 379--420. 

\bibitem{DDT94} G. Da Prato, A. Debussche and R. Temam, \emph{Stochastic Burgers' equation}, NoDEA Nonlinear Differential Equations Appl., \textbf{1} (1994), 389--402. 

\bibitem{DZ96} G. Da Prato and J. Zabczyk, \emph{Ergodicity for Infinite Dimensional Systems}, Cambridge University Press, Cambridge, 1996. 

\bibitem{DZ14} G. Da Prato and J. Zabczyk, \emph{Stochastic Equations in Infinite Dimensions}, Cambridge University Press, United Kingdom, 2014. 

\bibitem{EM01} W. E, J. C. Mattingly, \emph{Ergodicity for the Navier-Stokes equation with degenerate random forcing: finite-dimensional approximation}, Comm. Pure Appl. Math., \textbf{LIV} (2001), 1386--1402. 

\bibitem{EH01} J.-P. Eckmann and M. Hairer, \emph{Uniqueness of the invariant measure for a stochastic PDE driven by degenerate noise}, Comm. Math. Phys., \textbf{219} (2001), 523--565. 

\bibitem{F97a} B. Ferrario, \emph{The B$\acute{\mathrm{e}}$nard problem with random perturbations: dissipativity and invariant measures}, NoDEA Nonlinear Differential Equations Appl., \textbf{4} (1997), 101--121. 

\bibitem{F97b} B. Ferrario, \emph{Ergodic results for stochastic navier-stokes equation}, Stochastics and  Reports, \textbf{60} (1997), 271--288. 

\bibitem{F99a} B. Ferrario, \emph{Stochastic Navier-Stokes equations: analysis of the noise to have a unique invariant measure}, Annali di Matematica pura ed applicata, \textbf{CLXXVII} (1999), 331--347. 

\bibitem{F94} F. Flandoli, \emph{Dissipativity and invariant measures for stochastic Navier-Stokes equations}, NoDEA Nonlinear Differential Equations Appl., \textbf{1} (1994), 403--423. 

\bibitem{FM95} F. Flandoli and B. Maslowski, \emph{Ergodicity of the 2-D Navier-Stokes equation under random perturbations}, Comm. Math. Phys., \textbf{171} (1995), 119--141. 

\bibitem{HM06} M. Hairer and J. C. Mattingly, \emph{Ergodicity of the 2D Navier-Stokes equations with degenerate stochastic forcing}, Ann. of Math., \textbf{164} (2006), 993--1032. 

\bibitem{H56} T. E. Harris, \emph{The existence of stationary measures for certain Markov processes}, Proceedings of the Third Berkeley Symposium on Mathematical Statistics and Probability, 1954-1955, \textbf{II}, University of California Press, Berkeley and Los Angeles (1956),  113--124. 

\bibitem{H67} L. H$\ddot{\mathrm{o}}$rmander, \emph{Hypoelliptic second order differential equations}, Acta Math., \textbf{119} (1967), 147--171.  

\bibitem{KB37} N. Krylov and N. Bogoliubov, \emph{La th$\acute{\mathrm{e}}$orie g$\acute{\mathrm{e}}$n$\acute{\mathrm{e}}$rale de la mesure dans son application a l'$\acute{\mathrm{e}}$tude des syst$\acute{\mathrm{e}}$mes de la m$\acute{\mathrm{e}}$canique nonlin$\acute{\mathrm{e}}$aire}, Ann. Math., \textbf{38} (1937), 65--113. 

\bibitem{LW04} J. Lee and M.-Y. Wu, \emph{Ergodicity for the dissipative Boussinesq equations with random forcing}, J. Stat. Phys., \textbf{117} (2004), 929--973. 

\bibitem{M99} J. C. Mattingly, \emph{Ergodicity of 2D Navier-Stokes equations with random forcing and large viscosity}, Comm. Math. Phys., \textbf{206} (1999), 273--288. 

\bibitem{MP06} J. C. Mattingly, $\acute{\mathrm{E}}$. Pardoux, \emph{Malliavin calculus for the stochastic 2D Navier-Stokes equation}, Comm. Pure Appl. Math., \textbf{LIX} (2006), 1742--1790. 

\bibitem{O06} C. Odasso, \emph{Ergodicity for the stochastic complex Ginzburg-Landau equations}, Ann. Inst. Henri Poincar$\acute{e}$ Probab. Stat., \textbf{42} (2006), 417--454. 

\bibitem{R04} M. Romito, \emph{Ergodicity of the finite dimensional approximation of the 3D Navier-Stokes equations forced by a degenerate noise}, J. Stat. Phys., \textbf{114} (2004), 155--177. 

\bibitem{S10a} M. Sango, \emph{Magnetohydrodynamic turbulent flows: existence results}, Phys. D., \textbf{239} (2010), 912--923.  

\bibitem{SS99} S. S. Sritharan and P. Sundar, \emph{The stochastic magneto-hydrodynamic system}, Infin. Dimens. Anal. Quantum Probab. Relat. Top., \textbf{2} (1999), 241--265.  

\bibitem{S83} D. W. Stroock, \emph{Some Applications of Stochastic Calculus to Partial Differential Equations}, Eleventh Saint Flour probability summer school - 1981 (Saint Flour, 1981), Lecture Notes in Math., \textbf{976}, Springer, Berlin (1983), 267--382. 

\bibitem{S10b} P. Sundar, \emph{Stochastic magneto-hydrodynamic system perturbed by general noise}, Commun. Stoch. Anal., \textbf{8} (2010), 413--437. 

\bibitem{W06} M.-Y., Wu, \emph{Stochastic Boussinesq equations and the infinite dimensional Malliavin calculus}, Ph.D. Thesis, Princeton University, Princeton, 2006. 

\bibitem{Y16a} K. Yamazaki, \emph{Global martingale solution to the stochastic nonhomogeneous magnetohydrodynamics system}, Adv. Differential Equations, \textbf{21} (2016), 1085--1116.   

\bibitem{Y16b} K. Yamazaki, \emph{Global martingale solution for the stochastic Boussinesq system with zero dissipation}, Stoch. Anal. Appl., \textbf{34} (2016), 404--426.  

\bibitem{Y16c} K. Yamazaki, \emph{Ergodicity of the two-dimensional magnetic Benard problem}, Electron. J. Differential Equations, \textbf{2016} (2016), 1--24. 

\bibitem{Y17b} K. Yamazaki, \emph{Exponential convergence of the stochastic micropolar and magneto-micropolar fluid systems}, Commun. Stoch. Anal., \textbf{10} (2016), 271--295. 

\bibitem{Y17c} K. Yamazaki, \emph{Stochastic Hall-magnetohydrodynamics system in three and two and a half dimensions}, J. Stat. Phys., \textbf{166} (2017), 368--397. 

\end{thebibliography}
\end{document}